\def\BibTeX{{\rm B\kern-.05em{\sc i\kern-.025em b}\kern-.08em
    T\kern-.1667em\lower.7ex\hbox{E}\kern-.125emX}}
\newtheorem{problem}{Problem}
\newtheorem{assumption}{Assumption}
\newtheorem{definition}{Definition}
\newtheorem{lemma}{Lemma}
\newtheorem{proposition}{Proposition}
\newtheorem{theorem}{Theorem}
\begin{document}

\newgeometry{top=72pt,bottom=54pt,left=54pt,right=54pt}

\title{Lossless Convexification for Linear Systems with Piecewise Linear Controls}

\author{\IEEEauthorblockN{Shosuke Kiami}
\IEEEauthorblockA{\textit{Department of Mathematics} \\
\textit{University of California, Berkeley}\\
shokiami@berkeley.edu}
}

\maketitle

\begin{abstract}
Lossless Convexification (LCvx) is a convexification technique that transforms a class of nonconvex optimal control problems--where the nonconvexity arises from a lower bound on the control norm--into equivalent convex problems, with the goal being to apply fast polynomial-time solvers. However, to solve these infinite-dimensional problems in practice, they must first be converted into finite-dimensional problems, and it remains an open challenge to ensure the theoretical guarantees of LCvx are maintained across this discretization step. Prior work has proven guarantees for piecewise constant controls, but these methods do not extend to piecewise linear controls, which are more relevant to real world applications.

In this work, we present an algorithm that extends LCvx guarantees to piecewise linear controls. Under mild assumptions, our algorithm provably finds a solution violating the nonconvex constraints along at most $2n_x + 2$ trajectory ``edges" using $O(\log(\Delta\rho/\varepsilon))$ solver calls (where $n_x$ is the state space dimension and $\Delta\rho = \rho_{\max} - \rho_{\min}$ is the difference in our control norm bounds). A key feature is the perturbation of the control norm lower bound and the addition of rate constraints on the controls, ensuring LCvx holds along the trajectory edges. Finally, we provide numerical results demonstrating the effectiveness of our algorithm.
\end{abstract}

\begin{IEEEkeywords}
optimal control, trajectory optimization, lossless convexification
\end{IEEEkeywords}

\section{Introduction}
Suppose we are given the following linear time-invariant dynamical system:
$$\dot{x}(t) = A_c x(t) + B_c u(t)$$
where $x(t) \in \mathbb{R}^{n_x}$ is the system state, $u(t) \in \mathbb{R}^{n_u}$ is the control input, and $A_c \in \mathbb{R}^{n_x \times n_x}$, $B_c \in \mathbb{R}^{n_x \times n_u}$ define the dynamics model. In an optimal control problem, the goal is to minimize a continuous-time cost functional subject to the system dynamics and additional path constraints. This general formulation has powerful applications to various areas such as autonomous driving \cite{xiao}, economic design \cite{weber} population dynamics \cite{barbu}, etc.

If the cost functional and set of feasible controls are convex then this yields a convex optimization problem which we hope to solve via fast polynomial-time solvers such as interior point methods \cite{nemmirovski, nocedal}. However, in many aerospace applications, a common constraint is a nonzero lower and upper bound on the control norm which introduces nonconvexity \cite{acikmese, accikmecse2011lossless, accikmecse2013lossless, harris2014lossless}. As visualized in the left-hand size of Figure \ref{fig:viz}, this creates a control feasible set resembling an annulus (or more generally, a spherical shell in higher dimensions). We use the following formulation for our problem:
\begin{problem} \label{prob:original}
    \begin{align*}
         \min_{x(t), u(t), t_f} \quad & m\left(x\left(t_f\right)\right)+\int_0^{t_f} l_c\left(\|u(t)\|\right) dt\\
        \operatorname{s.t.} \quad & \dot{x}(t)=A_c x(t)+B_c u(t)\\
        & \rho_{\min} \le \|u(t)\| \le \rho_{\max}\\
        & x(0)=x_\textnormal{init}, \quad G(x(t_f)) = 0
    \end{align*}
\end{problem}
\noindent where $m: \mathbb{R}^{n_x} \to \mathbb{R}$ is a convex $C^1$ function, $l_c: \mathbb{R} \to \mathbb{R}$ is a monotonically increasing convex $C^1$ function, $G: \mathbb{R}^{n_x} \to \mathbb{R}^{n_G}$ is an affine map, $\rho_{\min} < \rho_{\max}$, and $\|\cdot\|$ denotes the Euclidean 2-norm.

The premise of Lossless Convexification (LCvx) is to relax this nonconvex problem into an equivalent convex problem by introducing a slack variable $\sigma(t) \in \mathbb{R}$ \cite{acikmese, accikmecse2011lossless, accikmecse2013lossless, harris2014lossless}, giving the following relaxed problem:
\begin{problem} \label{prob:lcvx}
    \begin{align*}
         \min_{x(t), u(t), \sigma(t), t_f} \quad & m\left(x\left(t_f\right)\right)+\int_0^{t_f} l_c\left(\sigma(t)\right) dt\\
        \operatorname{s.t.} \quad & \dot{x}(t)=A_c x(t)+B_c u(t)\\
        & \|u(t)\| \le \sigma(t) \quad \rho_{\min} \le \sigma(t) \le \rho_{\max}\\
        & x(0)=x_\textnormal{init}, \quad G(x(t_f)) = 0
    \end{align*}
\end{problem}

Note that our control feasible set is now convex as visualized in Figure \ref{fig:viz}.

\begin{figure}[t]
    \centering
    \includegraphics[width=0.45\textwidth]{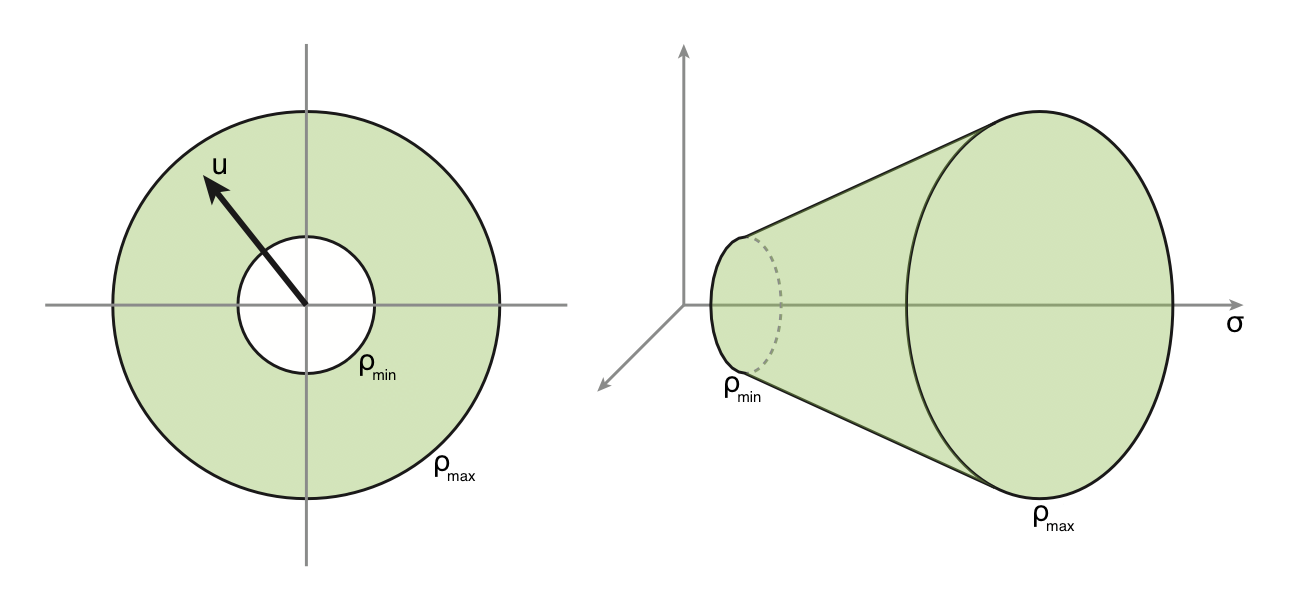}
    \caption{A geometric interpretation of lossless convexification. 
    Left: non-convex set of feasible controls. 
    Right: convexified set of feasible controls.}
    \label{fig:viz}
\end{figure}

Although the new convexified set of feasible controls could potentially allow for violation of our original nonconvex control constraints, it can be shown using Pontryagin's maximum

\restoregeometry

\noindent principle that the solution to the convexified problem always satisfies the nonconvex constraints \cite{acikmese, accikmecse2011lossless, accikmecse2013lossless, harris2014lossless, berkovitz}. In other words, this is not a relaxation in the usual sense: it does not provide an approximate solution, but an exact one.

However, in order to solve this problem in practice using solvers such as CVXPY \cite{cvxpy1, cvxpy2} or ECOS \cite{ecos}, we must first ``discretize" our trajectory into a finite-dimensional problem. The most common approach is the following:
\begin{definition} \label{def:traj}
    Given $t_f$, divide our trajectory into $N$ equal segments
    $$0 = t_1 \le t_2 \le \cdots \le t_{N+1} = t_f$$
    such that $t_{i+1} - t_i = t_f / N$ for all $i \in \{1, ..., N\}$. Let vertex $i$ denote the point $t_i$ and edge $\{i, i+1\}$ denote the interval $[t_i, t_{i+1}]$. Finally, define:
    \begin{align*}
        x &= (x_1, ..., x_{N+1}), \; x_i = x(t_i) \in \mathbb{R}^{n_x}\\
        u &= (u_1, ..., u_{N+1}), \; u_i = u(t_i) \in \mathbb{R}^{n_u}\\
        \sigma &= (\sigma_1, ..., \sigma_{N+1}), \; \sigma_i = \sigma(t_i) \in \mathbb{R}
    \end{align*}
\end{definition}

Given a discretized trajectory, there are several ways to parameterize our control input. The parameterization discussed in Luo et al. \cite{luo} is piecewise constant control, where the control along edge $\{i, i+1\}$ is held constant at $u_i$, giving us $N$ control variables (one per edge). However, this approach is often impractical, as many systems cannot instantaneously switch between control inputs. Therefore, in this paper, we focus on piecewise linear control, where the control along edge $\{i, i+1\}$ is linearly interpolated between $u_i$ and $u_{i + 1}$:
$$u(t) = \left( \frac{t_{i+1} - t}{t_{i+1} - t_i} \right) u_i + \left(\frac{t - t_{i}}{t_{i+1} - t_i} \right) u_{i + 1}$$
giving us $N+1$ control variables (one for each vertex).

Using this control parameterization, we can then discretize our dynamics as
$$x_{i + 1} = Ax_i + B_0 u_i + B_1 u_{i+1}$$
This is done using the state transition matrices
$$\Phi_A(t, t_i) = \frac{\partial x(t)}{\partial x_i}, \; \Phi_{B_0}(t, t_i) = \frac{\partial x(t)}{\partial u_i}, \; \Phi_{B_1}(t, t_i) = \frac{\partial x(t)}{\partial u_{i+1}}$$
By the chain rule, we have
\begin{align*}
    \dot{\Phi}_A(t, t_i) &=  \frac{\partial \dot{x}(t)}{\partial x_i} = A_c \Phi_A(t, t_i)\\
    \dot{\Phi}_{B_0}(t, t_i) &= \frac{\partial \dot{x}(t)}{\partial u_i} = A_c \Phi_{B_0}(t, t_i) + B_c \left( \frac{t_{i+1} - t}{t_{i+1} - t_i} \right)\\
    \dot{\Phi}_{B_1}(t, t_i) &= \frac{\partial \dot{x}(t)}{\partial u_{i+1}} = A_c \Phi_{B_1}(t, t_i) + B_c \left( \frac{t - t_{i}}{t_{i+1} - t_i} \right)
\end{align*}
with the initial conditions $\smash{\Phi_A(t_i, t_i) = I}$, $\smash{\Phi_{B_0}(t_i, t_i) = 0}$, and $\smash{\Phi_{B_1}(t_i, t_i) = 0}$. In practice, we can numerically integrate these to get
$$A = \Phi_A(t_{i+1}, t_i), \; B_0 = \Phi_{B_0}(t_{i+1}, t_i), \; B_1 = \Phi_{B_1}(t_{i+1}, t_i)$$

Finally, note that $t_f$ can be minimized via a linear search. Thus, for the remainder of this paper, we assume it is fixed as doing so distills the main contributions of our method. This yields the following discretized version of the problem:
\begin{problem} \label{prob:discrete}
    \begin{align*}
        \min_{x, u, \sigma} \quad & m\left(x_{N+1}\right) + \sum_{i = 1}^{N+1} l_i\left(\sigma_i \right) \\
        \operatorname{s.t.} \quad & x_{i+1} = A x_{i} + B_0 u_i + B_1 u_{i+1} \quad \forall i \in \{1, ..., N\} \\
        & \rho_{\min} \le \sigma_i \le \rho_{\max}, \quad \|u_i\| \le \sigma_i \quad \forall i \in \{1, ..., N+1\}\\
        & x_1 = x_\textnormal{init}, \quad G\left(x_{N+1}\right) = 0
    \end{align*}
\end{problem}

Although now our problem is tractable to numerical solvers, LCvx theory no longer offers guarantees for discretized problems. It turns out that the solution to Problem \ref{prob:discrete} may violate our original nonconvex constraints \cite{luo}. Throughout the remainder of this paper, we use the following terminology:
\begin{definition} \label{def:lcvx}
    We say that LCvx holds at vertex $i$ if
    $$\rho_{\min} \le \|u_i\| \le \rho_{\max}$$
    For piecewise linear controls, we say LCvx holds along edge $\{i, i+1\}$ if
    $$\rho_{\min} \le \|u(t)\| \le \rho_{\max} \qquad \forall t \in [t_i, t_{i+1}]$$
\end{definition}

\begin{figure}[t]
    \centering
    \includegraphics[width=0.24\textwidth]{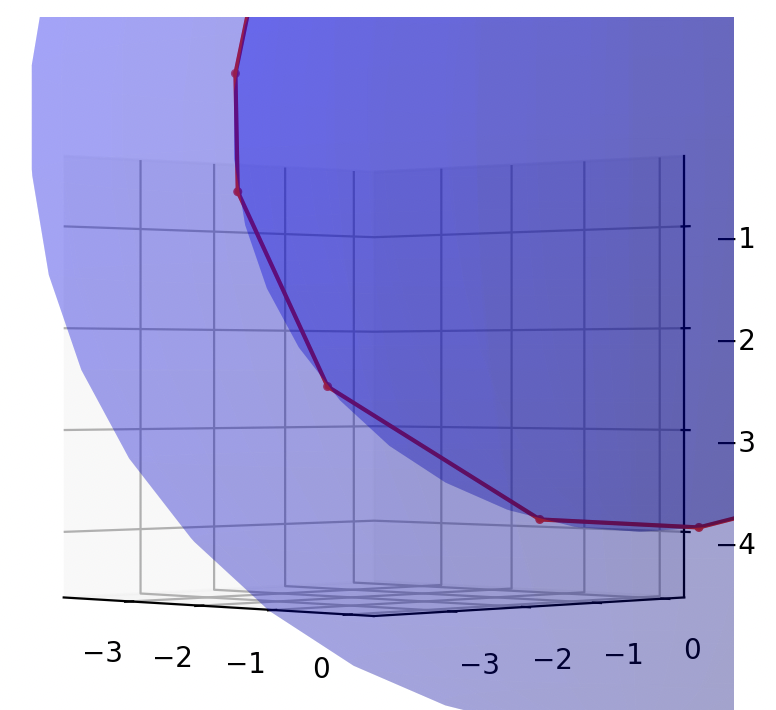}
    \includegraphics[width=0.24\textwidth]{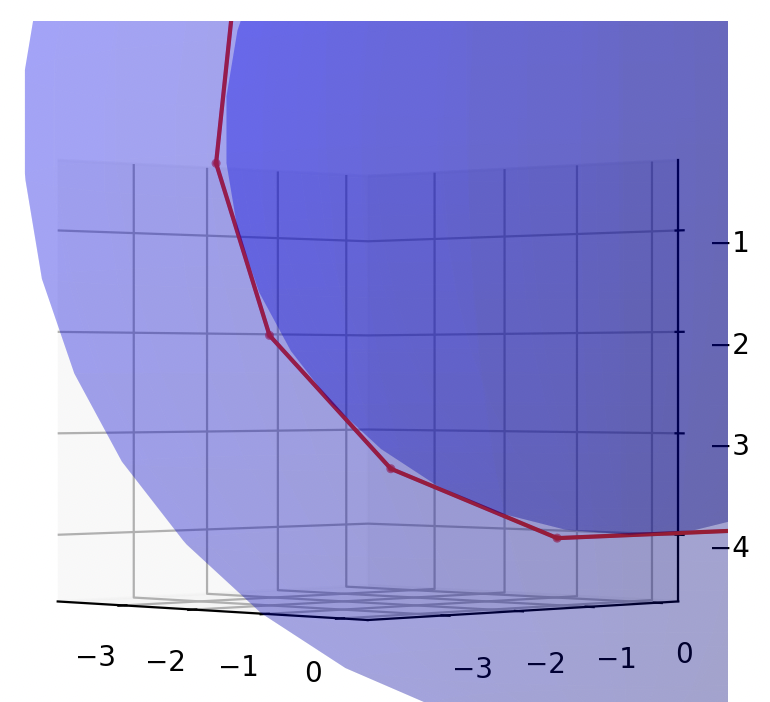}
    \caption{Illustration of why LCvx at the vertices is insufficient for piecewise linear controls. Left: LCvx holding at vertices but not edges. Right: LCvx holding along the vertices and edges.}
    \label{fig:zoom}
\end{figure}

Luo et al. show that for piecewise constant controls, solving the discretized problem ensures LCvx is violated on at most $n_x - 1$ vertices \cite{luo}. However, this result fails to extend to piecewise linear controls for the following two reasons:
\begin{enumerate}
    \item Discretizing with piecewise constant controls gives the following dynamics:
    $$x_{i+1} = Ax_i + Bu_i$$
    which is fundamentally different from our discretized dynamics.
    \item Even if we have LCvx guarantees at the vertices, this does not guarantee LCvx along the edges interpolating them (Figure \ref{fig:zoom}).
\end{enumerate}

The goal of this paper is to address these discrepancies. Section 2 examines the first point, proving an upper bound on the number of vertices violating LCvx. Section 3 discusses the second point, presenting an algorithm that extends the LCvx guarantees to the edges of the trajectory. Finally, Section 4 provides numerical results demonstrating the correctness of our algorithm on a classical example.

\section{Lossless Convexification at the Vertices}

In this section, we examine LCvx at the vertices of our discretized trajectory. Namely, our goal is to provide sufficient conditions such that LCvx is violated at a minimal number of vertices.

First, we enumerate all necessary assumptions that will be used by this section:
\begin{assumption} \label{assump:grand}
    Assume:
    \begin{enumerate}
        \item There exists a feasible trajectory such that $\|u_i\| < \rho_{\max}$ for all $i$.
        
        \item $A$ and $B_0 + AB_1$ form a controllable pair, i.e.
        $$\textnormal{rank}\begin{bmatrix} (B_0 + AB_1) & \cdots & A^{n_x-1}(B_0 + AB_1) \end{bmatrix} = n_x$$
        
        \item Let $z = (x, u, \sigma) \in \mathbb{R}^{n_z}$ and express our equality constraints as $H(z) = 0$ where affine $H: \mathbb{R}^{n_z} \to \mathbb{R}^{n_H}$. Then
        $$\textnormal{rank}\nabla H(z) = n_H$$
        
        \item For any solution $(x^*, u^*, \sigma^*)$, if
        $$x_{N+1}^* = \arg\min_{x} m(x) \textnormal{ s.t. } G(x) = 0$$
        then there exists some $\|u_i^*\| > 0$.
    \end{enumerate}
\end{assumption}

All of these assumptions are rather mild. Assumption \ref{assump:grand}.1 says we should be able to get from $x_1 = x_\textnormal{init}$ to $G(x_{N+1}) = 0$ without exerting maximal control effort. Assumption \ref{assump:grand}.2 intuitively requires our dynamical system to be controllable. Assumption \ref{assump:grand}.3 is a standard assumption in optimal control theory ensuring independence of our constraints \cite{luo}. Assumption \ref{assump:grand}.4 requires our system to exert some non-zero control effort to satisfy our terminal condition optimally.

Now, we define a couple of useful sets representing the inequality constraints in our problem:
\begin{definition} \label{def:v}
    Define our control feasible set to be:
    $$V = \{(u, \sigma) \in \mathbb{R}^{n_u} \times \mathbb{R} : \rho_{\min} \le \sigma \le \rho_{\max}, \|u\| \le \sigma)$$
    Denote taking a slice of $V$ at a fixed $\sigma \in [\rho_{\min}, \rho_{\max}]$ as:
    $$V(\sigma) = \{u \in \mathbb{R}^{n_u} : \|u\| \le \sigma)$$
\end{definition}

We now begin building our conditions with the following fact.
\begin{lemma} \label{lem:boundary}
    If $u_i$ is on the boundary of $V(\sigma_i)$, then LCvx holds at vertex $i$.
\end{lemma}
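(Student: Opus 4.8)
The statement to prove is Lemma~\ref{lem:boundary}: if $u_i$ lies on the boundary of $V(\sigma_i)$, i.e. $\|u_i\| = \sigma_i$, then LCvx holds at vertex $i$, meaning $\rho_{\min} \le \|u_i\| \le \rho_{\max}$. The plan is that this should be essentially immediate from the constraint structure of Problem~\ref{prob:discrete}: we are given $\rho_{\min} \le \sigma_i \le \rho_{\max}$ as a feasibility constraint, and $\|u_i\| \le \sigma_i$. The hypothesis $u_i \in \partial V(\sigma_i)$ upgrades the latter to the equality $\|u_i\| = \sigma_i$. Chaining these, $\rho_{\min} \le \sigma_i = \|u_i\| \le \sigma_i \le \rho_{\max}$, which is exactly the definition of LCvx holding at vertex $i$ (Definition~\ref{def:lcvx}).

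So the skeleton is: (1) recall that $V(\sigma_i) = \{u : \|u\| \le \sigma_i\}$ is the closed Euclidean ball of radius $\sigma_i$, so its boundary is the sphere $\{u : \|u\| = \sigma_i\}$ (this uses $\sigma_i > 0$, which follows from $\sigma_i \ge \rho_{\min} > 0$ — here I would note that $\rho_{\min} > 0$; the paper writes $\rho_{\min} < \rho_{\max}$ and treats $\rho_{\min}$ as the nonzero lower bound on the control norm, so $\rho_{\min} > 0$ is in force, and this also rules out the degenerate case where the ball is a single point with empty boundary); (2) conclude $\|u_i\| = \sigma_i$; (3) substitute into the feasibility bounds $\rho_{\min} \le \sigma_i \le \rho_{\max}$ to get $\rho_{\min} \le \|u_i\| \le \rho_{\max}$; (4) invoke Definition~\ref{def:lcvx}.

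I do not anticipate a genuine obstacle here — the lemma is a bookkeeping step that isolates the one easy case (boundary controls) before the real work, which is presumably showing that LCvx-violating vertices must have interior controls and then bounding how many such vertices a solution of Problem~\ref{prob:discrete} can have via Pontryagin/KKT arguments using Assumption~\ref{assump:grand}. The only thing to be slightly careful about is making explicit that $\sigma_i > 0$ so that "boundary of $V(\sigma_i)$" is the sphere of radius $\sigma_i$ rather than the empty set; once that is said, the proof is a one-line inequality chain.
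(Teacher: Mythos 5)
Your proposal is correct and follows essentially the same route as the paper's proof: boundary membership gives $\|u_i\| = \sigma_i$, and the feasibility bounds $\rho_{\min} \le \sigma_i \le \rho_{\max}$ then yield the conclusion via Definition~\ref{def:lcvx}. The extra remark about $\sigma_i > 0$ is harmless but not needed, since even in the degenerate case $\sigma_i = 0$ the boundary of $V(\sigma_i)$ is $\{0\}$ and the inequality chain still goes through.
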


\begin{proof}
    By Definition \ref{def:v}, if $u_i$ is on the boundary of $V(\sigma_i)$ then $\|u_i\| = \sigma_i$. Since $\rho_{\min} \le \sigma_i \le \rho_{\max}$, we have $\rho_{\min} \le \|u_i\| \le \rho_{\max}$. Our result follows from Definition \ref{def:lcvx}.
\end{proof}

Intuitively, this result states that our control values should lie on the outer boundary of the convexified set in Figure \ref{fig:viz} in order to guarantee LCvx.

To analyze when this is the case, we begin by writing down the Lagrangian of Problem \ref{prob:discrete}:
\begin{definition} \label{def:lagrangian}
    \begin{align*}
        &\mathcal{L}(x, u, \sigma, \eta, \mu_1, \mu_2) =\\
        &\qquad \qquad \qquad m\left(x_{N+1}\right) + \sum_{i=1}^{N+1} l_i\left( \sigma_i \right)\\
        &\qquad \qquad \qquad + \sum_{i=1}^N \eta_i^\top \left(-x_{i+1} + A x_i + B_0 u_{i} + B_1 u_{i+1} \right)\\
        &\qquad \qquad \qquad + \mu_1^\top \left( x_1 - x_\textnormal{init} \right) + \mu_2^\top G(x_{N+1})\\
        &\qquad \qquad \qquad + \sum_{i=1}^{N+1} I_V(u_i, \sigma_i)
    \end{align*}
    where
    $$I_V(u_i, \sigma_i) = \begin{cases} 
        0 & \textnormal{if } (u_i, \sigma_i) \in V\\
        \infty & \textnormal{o.w.}
    \end{cases}$$
    $\mu_1 \in \mathbb{R}^{n_x}$, $\mu_2 \in \mathbb{R}^{n_G}$, and $\eta = (\eta_1, ..., \eta_N)$ with $\eta_i \in \mathbb{R}^{n_x}$.
\end{definition}

We make the following observation:
\begin{lemma} \label{lem:duality}
    For any primal solution $(x^*, u^*, \sigma^*)$, there exists dual variables $\eta^*, \mu_1^*, \mu_2^*$ such that
    $$(x^*, u^*, \sigma^*) = \arg\min_{x, u, \sigma} \mathcal{L}(x, u, \sigma, \eta^*, \mu_1^*, \mu_2^*)$$
\end{lemma}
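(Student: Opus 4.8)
The plan is to invoke strong duality (or more precisely, the existence of Lagrange multipliers satisfying the KKT conditions) for the convex program in Problem \ref{prob:discrete}, and then translate the stationarity condition of the KKT system into the statement that the primal optimum minimizes the Lagrangian. First I would observe that Problem \ref{prob:discrete} is a convex optimization problem: the objective $m(x_{N+1}) + \sum_i l_i(\sigma_i)$ is convex (since $m$ and each $l_i$ are convex), the dynamics constraints are affine, the terminal constraint $G(x_{N+1}) = 0$ is affine, and the set $V$ is convex, so the constraint $(u_i,\sigma_i) \in V$ is convex. Hence the indicator terms $I_V(u_i,\sigma_i)$ are convex, and $\mathcal{L}(\cdot,\cdot,\cdot,\eta,\mu_1,\mu_2)$ is a convex function of $(x,u,\sigma)$ for every fixed dual variable.

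Next I would establish a constraint qualification so that multipliers exist. All the nonlinear/conic structure of the problem lives in the constraints $(u_i,\sigma_i)\in V$, which we may write as $\|u_i\| \le \sigma_i$, $\sigma_i \le \rho_{\max}$, $\rho_{\min} \le \sigma_i$; Assumption \ref{assump:grand}.1 furnishes a feasible point with $\|u_i\| < \rho_{\max}$, hence (taking $\sigma_i = \rho_{\max}$ there, or $\sigma_i$ slightly larger than $\|u_i\|$ but still below $\rho_{\max}$) a point that is strictly feasible for the conic and the $\rho_{\max}$ inequalities; combined with Assumption \ref{assump:grand}.3, which guarantees the Jacobian of the affine equality constraints $H(z)=0$ has full row rank $n_H$, this is a Slater-type condition for the convex problem. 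Therefore KKT multipliers exist: there are $\eta^* = (\eta_1^*,\dots,\eta_N^*)$, $\mu_1^*$, $\mu_2^*$ (the multipliers for the dynamics, initial, and terminal equality constraints) together with multipliers for the $V$-constraints such that the Lagrangian stationarity condition holds, i.e. $0 \in \partial_{x,u,\sigma}\mathcal{L}(x^*,u^*,\sigma^*,\eta^*,\mu_1^*,\mu_2^*)$, where the subdifferential accounts for the nonsmooth indicator terms $I_V$.

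Finally I would conclude: since $\mathcal{L}(\cdot,\cdot,\cdot,\eta^*,\mu_1^*,\mu_2^*)$ is convex in $(x,u,\sigma)$ and $0$ belongs to its subdifferential at $(x^*,u^*,\sigma^*)$, the point $(x^*,u^*,\sigma^*)$ is a global minimizer of the Lagrangian over all $(x,u,\sigma)$, which is exactly the claim. (If the problem has a unique primal solution, equality of the argmin set with $\{(x^*,u^*,\sigma^*)\}$ is immediate; otherwise the statement should be read as ``$(x^*,u^*,\sigma^*)$ attains the minimum,'' and I would phrase it that way.) The main obstacle I anticipate is the constraint qualification bookkeeping: the constraint $\|u_i\| \le \sigma_i$ is a second-order cone constraint rather than a smooth inequality, so I must either appeal to the conic/Fenchel form of strong duality or carefully argue that Assumption \ref{assump:grand}.1 yields a point relatively interior to the feasible set of the conic constraints while simultaneously the affine constraints are nondegenerate (Assumption \ref{assump:grand}.3); getting the interiority-versus-affine split right is the only delicate point, and everything after the existence of multipliers is routine convexity.
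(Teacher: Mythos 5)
Your proposal is correct and follows essentially the same route as the paper: use Assumption \ref{assump:grand}.1 to build a strictly feasible (Slater) point by choosing each $\sigma_i$ strictly between $\max\{\|u_i\|,\rho_{\min}\}$ and $\rho_{\max}$, then invoke strong duality/KKT for the convex problem to conclude that the primal optimum minimizes the Lagrangian at the optimal multipliers. One small nit: your first suggested choice $\sigma_i=\rho_{\max}$ is not strictly feasible for $\sigma_i\le\rho_{\max}$, and ``slightly larger than $\|u_i\|$'' could fall below $\rho_{\min}$, so the choice must be pinned to the open interval $\bigl(\max\{\|u_i\|,\rho_{\min}\},\,\rho_{\max}\bigr)$, exactly as the paper does.
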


\begin{proof}
    By Assumption \ref{assump:grand}.1, we have some a feasible trajectory such that $\|u_i\| < \rho_{\max}$. For all $i$, let
    $$\sigma_i = \frac{\max\{\|u_i\|, \rho_{\min} \} + \rho_{\max}}{2}$$
    
    Since $\rho_{\min} < \rho_{\max}$, this gives $\rho_{\min} < \sigma_i < \rho_{\max}$ and $\|u_i\| < \sigma_i$. Thus, we have a feasible trajectory satisfying Slater's condition. By section 5.9.1 and 5.9.2 of \cite{boyd}, we have our result.
\end{proof}

We can use Lemma \ref{lem:duality} along with KKT conditions of optimality to generate the following useful relationships between our dual variables.
\begin{lemma} \label{lem:etas}
    We have:
    \begin{enumerate}
        \item $\eta_{i-1}^* = A^\top \eta_i^* \quad \forall i \in \{2, ..., N\}$
        \item $\eta_N^* = \nabla m(x_{N+1}^*) + \mu_2^\top \nabla G(x_{N+1}^*)$
    \end{enumerate}
\end{lemma}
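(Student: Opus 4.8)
The plan is to extract both identities from stationarity of the Lagrangian in the state variables. By Lemma \ref{lem:duality}, the primal optimum $(x^*,u^*,\sigma^*)$ minimizes $\mathcal{L}(\cdot,\cdot,\cdot,\eta^*,\mu_1^*,\mu_2^*)$ over all $(x,u,\sigma)$. Since the indicator terms $I_V(u_i,\sigma_i)$ do not depend on $x$, the map $x \mapsto \mathcal{L}$ is a sum of the $C^1$ function $m(x_{N+1})$ and affine terms, hence differentiable, so its gradient with respect to $x$ must vanish at $x^*$. It therefore suffices to compute $\partial\mathcal{L}/\partial x_i$ for each relevant $i$ and set it to zero; the key is careful bookkeeping of which summands in the dynamics block $\sum_{j=1}^N \eta_j^\top(-x_{j+1} + Ax_j + B_0 u_j + B_1 u_{j+1})$ actually involve a given $x_i$.

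For a fixed interior index $i \in \{2,\dots,N\}$, the variable $x_i$ appears only in that dynamics block: the $j=i$ summand contributes $\eta_i^\top A x_i$ and the $j=i-1$ summand contributes $-\eta_{i-1}^\top x_i$ through its $-x_{j+1}$ term. Differentiating and setting the result to zero gives $A^\top\eta_i^* - \eta_{i-1}^* = 0$, which is identity (1). For $i = N+1$, the variable $x_{N+1}$ appears in $m(x_{N+1})$, in the $j=N$ dynamics summand through $-\eta_N^\top x_{N+1}$, and in the terminal penalty $\mu_2^\top G(x_{N+1})$; stationarity then yields $\nabla m(x_{N+1}^*) - \eta_N^* + \mu_2^{*\top}\nabla G(x_{N+1}^*) = 0$, which rearranges to identity (2), using that $G$ is affine so $\nabla G$ is a constant matrix.

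I do not expect any real obstacle: the whole argument reduces to first-order optimality of an unconstrained convex minimization, and the only place to slip is the index shift — distinguishing the contribution of $x_i$ via the $Ax_j$ term ($j=i$) from its contribution via the $-x_{j+1}$ term ($j=i-1$). Once those are matched correctly, both claims follow directly.
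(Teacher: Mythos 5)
Your proposal is correct and follows essentially the same route as the paper: invoke Lemma \ref{lem:duality}, note that only the $C^1$ terms of $\mathcal{L}$ depend on $x$, and set $\partial\mathcal{L}/\partial x_i = 0$, with the same index bookkeeping (the $Ax_j$ term at $j=i$ versus the $-x_{j+1}$ term at $j=i-1$) yielding both identities. No gaps.
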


\begin{proof}
    From Lemma \ref{lem:duality}, we have
    $$x^* = \arg\min_{x} \mathcal{L}(x, u^*, \sigma^*, \eta^*, \mu_1^*, \mu_2^*)$$
    
    By the KKT conditions of optimality, we know that $\smash{\tfrac{\partial}{\partial x_i} \mathcal{L}(x, u^*, \sigma^*, \eta^*, \mu_1^*, \mu_2^*) = 0}$ for all $i$. Thus, isolating each component, we get\\
    $i = 1$:
    $$\mu_1^\top + \eta_1^{* \top} A = 0$$
    $2 \le i \le N$:
    $$\eta_i^{* \top} A - \eta_{i-1}^{* \top} = 0$$
    $i = N+1$:
    $$\nabla m(x_{N+1}^*) + \mu_2^\top \nabla G(x_{N+1}^*) - \eta_N^* = 0$$
\end{proof}

Using Lemma \ref{lem:duality} and \ref{lem:etas}, we provide our first set of sufficient conditions for LCvx to hold at a given vertex.
\begin{proposition} \label{prop:conditions}
    LCvx holds at vertex $i$ if:
    $$\begin{cases} 
        B_0^\top \eta_1^* \ne 0 & \textnormal{if } i = 1\\
        B_1^\top \eta_N^* \ne 0 & \textnormal{if } i = N + 1\\
        (B_0 + AB_1)^\top \eta_i^* \ne 0 & \textnormal{o.w.}
    \end{cases}$$
\end{proposition}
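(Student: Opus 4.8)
The plan is to use Lemma \ref{lem:duality} and isolate the control variables in the Lagrangian minimization, deriving first-order optimality conditions for each $u_i$, and then combining these with the dual relations from Lemma \ref{lem:etas} to conclude via Lemma \ref{lem:boundary}. Since $(x^*, u^*, \sigma^*) = \arg\min \mathcal{L}$, in particular $u^*$ minimizes $\mathcal{L}(x^*, u, \sigma^*, \eta^*, \mu_1^*, \mu_2^*)$ over $u$. Collecting the terms of $\mathcal{L}$ that depend on $u_i$: the control $u_i$ appears in the dynamics term for edge $\{i-1,i\}$ (through $B_1 u_i$, for $i \ge 2$) and for edge $\{i, i+1\}$ (through $B_0 u_i$, for $i \le N$), as well as in $I_V(u_i, \sigma_i)$. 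So for an interior vertex $2 \le i \le N$ the relevant part is $\eta_{i-1}^{*\top} B_1 u_i + \eta_i^{*\top} B_0 u_i + I_V(u_i, \sigma_i)$, and using Lemma \ref{lem:etas}.1 to substitute $\eta_{i-1}^* = A^\top \eta_i^*$ this becomes $\big((B_0 + AB_1)^\top \eta_i^*\big)^\top u_i + I_V(u_i, \sigma_i)$. For $i=1$ the only dynamics contribution is $\eta_1^{*\top} B_0 u_1$, giving the linear term $(B_0^\top \eta_1^*)^\top u_1$; for $i = N+1$ only $\eta_N^{*\top} B_1 u_{N+1}$ survives, giving $(B_1^\top \eta_N^*)^\top u_{N+1}$.

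In each case, $u_i^*$ minimizes a function of the form $c^\top u + I_{V(\sigma_i^*)}(u)$ over $u \in \mathbb{R}^{n_u}$, where $c$ is the relevant vector ($B_0^\top \eta_1^*$, $B_1^\top \eta_N^*$, or $(B_0+AB_1)^\top \eta_i^*$), and $\sigma_i^*$ is fixed. This is the minimization of a linear functional over the Euclidean ball $V(\sigma_i^*) = \{u : \|u\| \le \sigma_i^*\}$. The key elementary fact is that if $c \ne 0$, then the unique minimizer of $c^\top u$ over $\{\|u\| \le r\}$ with $r > 0$ is $u = -r\, c / \|c\|$, which has norm exactly $r$, i.e. lies on the boundary of the ball. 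Here $r = \sigma_i^* \ge \rho_{\min} > 0$, so whenever the hypothesis $c \ne 0$ holds we conclude $\|u_i^*\| = \sigma_i^*$, i.e. $u_i^*$ is on the boundary of $V(\sigma_i^*)$. Lemma \ref{lem:boundary} then immediately gives that LCvx holds at vertex $i$.

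The main obstacle — really the only nontrivial point — is the careful bookkeeping of which dynamics terms contain $u_i$ for the boundary indices $i = 1$ and $i = N+1$ versus the interior, and correctly applying the substitution from Lemma \ref{lem:etas}.1 (which is valid precisely for $2 \le i \le N$, matching the "otherwise" case). One should also note that we are implicitly using separability of the minimization over the $u_i$'s: the terms in $\mathcal{L}$ split as a sum over $i$ of functions each depending only on $u_i$ (with $\sigma^*, x^*, \eta^*$ held fixed), so minimizing the sum is equivalent to minimizing each summand, licensing the per-vertex argument above. Everything else is the one-line convex-analysis fact about linear optimization over a ball.
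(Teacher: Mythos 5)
Your proposal is correct and follows essentially the same route as the paper: separate the Lagrangian minimization over each $u_i$, substitute $\eta_{i-1}^* = A^\top \eta_i^*$ via Lemma \ref{lem:etas}.1 for the interior vertices, and conclude that $u_i^*$ lies on the boundary of $V(\sigma_i^*)$ so that Lemma \ref{lem:boundary} applies. The only (harmless) difference is the final step, where the paper invokes normal-cone results from the convex-analysis literature while you argue directly that the minimizer of $c^\top u$ over a ball of radius $\sigma_i^* \ge \rho_{\min} > 0$ with $c \ne 0$ is $-\sigma_i^* c/\|c\|$, which is a more elementary but equivalent justification.
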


\begin{proof}
    See Appendix A.
    \renewcommand{\qedsymbol}{}
\end{proof}

To further develop these conditions into a more usable form, we can leverage the structure imposed by Assumptions \ref{assump:grand}.2 and \ref{assump:grand}.3. However, to do so, we must also first introduce a small perturbation to our dynamics model. More specifically, define a perturbation to $A$ as follows:
\begin{definition} \label{def:perturbation}
    Let $A = P J P^{-1}$ be the Jordan normal form of $A$ and $\lambda_1, ..., \lambda_{d}$ be the distinct eigenvalues of $A$. Given $q \in \mathbb{R}^d$, define a perturbation of $A$ as
    $$\tilde{A}(q) = P \tilde{J} P^{-1}$$
    where $\tilde{J}$ is obtained by replacing the eigenvalues of $J$ with
    $$(\tilde{\lambda}_1, ..., \tilde{\lambda}_d) = (\lambda_1, ..., \lambda_d) + (q_1, ..., q_d)$$
    (leaving Jordan blocks unaffected).
\end{definition}

Replacing $A$ in Problem \ref{prob:discrete} with $\tilde{A}(q)$ gives us our new problem:
\begin{problem} \label{prob:perturbed}
    \begin{align*}
        \min _{x, u, \sigma} \quad & m\left(x_{N+1}\right) + \sum_{i = 1}^{N+1} l_i\left(\sigma_i \right)\\
       \operatorname{s.t.} \quad & x_{i+1} = \tilde{A}(q) x_{i} + B_0 u_i + B_1 u_{i+1} \quad \forall i \in \{1, ..., N\}\\
        & \rho_{\min} \le \sigma_i \le \rho_{\max}, \quad \|u_i\| \le \sigma_i \quad \forall i \in \{1, ..., N+1\}\\
        & x_1 = x_\textnormal{init}, \quad G\left(x_{N+1}\right) = 0
    \end{align*}
\end{problem}

Before we discuss how Assumptions \ref{assump:grand}.2 and \ref{assump:grand}.3 and this perturbation contribute to the LCvx guarantees, we note that the perturbation does not affect our previous assumptions and has a negligible affect on the optimal trajectory:
\begin{proposition} \label{prop:perturbed_traj}
    For any $\delta > 0$, there exists some $\varepsilon_A > 0$ such that if $\forall q_i \in [-\varepsilon_A, \varepsilon_A]$, then Problem \ref{prob:perturbed} still satisfies Assumption \ref{assump:grand} and
    \begin{enumerate}
        \item $\|G (\tilde{x}_{N+1} ) \| \le \delta$
        \item $| m (\tilde{x}_{N+1} ) + \sum_{i=1}^{N+1} l_i ( \|u_i^* \| ) - m^* | \le \delta$
    \end{enumerate}
    where $\tilde{x}_{N+1}$ is achieved by taking $x_1 = x_\textnormal{init}$ and iterating $x_{i+1} = A x_{i} + B_0 u_i^* + B_1 u_{i+1}^*$, and $m^*$ is the optimal cost of solving Problem \ref{prob:discrete} (without perturbation).
\end{proposition}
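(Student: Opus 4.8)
The plan is to make the whole statement a continuity argument in the perturbation parameter $q$ around $q=0$, using that $\tilde A(0)=A$ so Problem \ref{prob:perturbed} collapses to Problem \ref{prob:discrete} there. First I would record that $q\mapsto\tilde A(q)$ is continuous with $\tilde A(0)=A$: by Definition \ref{def:perturbation}, $\tilde A(q)=P\tilde J P^{-1}$ with $P,P^{-1}$ fixed and $\tilde J$ depending affinely on $q$ through its diagonal eigenvalue entries, so $\|\tilde A(q)-A\|\le C_0\|q\|_\infty$ for a constant $C_0=C_0(P)$; consequently, propagating a fixed, bounded control $N$ steps through a recursion whose matrix moves by $O(\|q\|_\infty)$ changes the terminal state by $O(\|q\|_\infty)$, a fact I will reuse. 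Assumptions \ref{assump:grand}.2 and \ref{assump:grand}.3 then persist for small $\varepsilon_A$ because they are full-rank conditions on matrices that depend continuously on $\tilde A(q)$ (the controllability matrix of $(\tilde A(q),B_0+\tilde A(q)B_1)$; the constraint Jacobian $\nabla H$, whose dynamics blocks carry $\tilde A(q)$), and full rank is an open condition. For Assumption \ref{assump:grand}.1 I would take the unperturbed feasible trajectory $(\bar x,\bar u)$ with $\|\bar u_i\|<\rho_{\max}$, propagate $\bar u$ through the perturbed dynamics to get $\hat x$ with $\|\hat x_{N+1}-\bar x_{N+1}\|=O(\|q\|_\infty)$, then use controllability of the perturbed pair (just established) to add a correction $\Delta u$ with $\|\Delta u\|_\infty=O(\|q\|_\infty)$ steering the terminal state exactly back onto $\{G=0\}$; for $\varepsilon_A$ small this keeps $\|\bar u_i+\Delta u_i\|<\rho_{\max}$. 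For Assumption \ref{assump:grand}.4, the only control making the integrand vanish identically is $u\equiv0$, forcing $x_{N+1}=\tilde A(q)^N x_\text{init}$, a single point moving continuously with $q$; it meets $\arg\min_x\{m(x):G(x)=0\}$ for at most an isolated set of $q$, so the implication survives (for generic small $q$).

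For the two quantitative claims, let $(x^*,u^*,\sigma^*)$ be an optimizer of Problem \ref{prob:perturbed} (one exists: $\|u_i\|\le\sigma_i\le\rho_{\max}$ bounds $(u,\sigma)$, hence the states, so the feasible set is compact), let $\tilde x$ be $u^*$ propagated through the unperturbed dynamics, and let $f(q)$ denote the optimal value of Problem \ref{prob:perturbed}, so $f(0)=m^*$. Since $\|u_i^*\|\le\rho_{\max}$ uniformly in $q$, all $x_i^*$ and $\tilde x_i$ lie in a fixed compact set, and the remark above gives $\|\tilde x_{N+1}-x_{N+1}^*\|=O(\|q\|_\infty)$. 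Claim 1 is then immediate: $G$ is affine and $G(x_{N+1}^*)=0$, so $\|G(\tilde x_{N+1})\|=\|G(\tilde x_{N+1})-G(x_{N+1}^*)\|=O(\|q\|_\infty)\le\delta$ once $\varepsilon_A$ is small enough.

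For Claim 2 I would sandwich $m(\tilde x_{N+1})+\sum_i l_i(\|u_i^*\|)$ around $m^*$. One side is easy: $m$ is Lipschitz on the compact set, and $\sigma_i^*\ge\|u_i^*\|$ with $l_i$ increasing, so $m(\tilde x_{N+1})+\sum_i l_i(\|u_i^*\|)\le m(x_{N+1}^*)+\sum_i l_i(\sigma_i^*)+O(\|q\|_\infty)=f(q)+O(\|q\|_\infty)$, while $f(q)\to f(0)=m^*$ by continuity of the optimal value of a convex program under continuous perturbation of its data, together with the compactness above (a convergent subsequence of perturbed optimizers limits to a feasible point of Problem \ref{prob:discrete}, giving lower semicontinuity; perturbing a near-optimal point of Problem \ref{prob:discrete} by the controllability correction gives upper semicontinuity). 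For the reverse inequality I would convert $u^*$ into a genuine feasible point of Problem \ref{prob:discrete} via the same controllability correction, whose cost is within $O(\|q\|_\infty)$ of $m(\tilde x_{N+1})+\sum_i l_i(\sigma_i^*)$, yielding $m^*\le m(\tilde x_{N+1})+\sum_i l_i(\sigma_i^*)+O(\|q\|_\infty)$.

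The main obstacle is buried in that last line: to replace $\sum_i l_i(\sigma_i^*)$ by $\sum_i l_i(\|u_i^*\|)$ I need the slack tight at the optimizer of Problem \ref{prob:perturbed}, i.e.\ $\|u_i^*\|\ge\rho_{\min}$ for all $i$, which is exactly LCvx holding at every vertex of the perturbed solution. The cleanest route is therefore to establish that tightness first --- either through the LCvx analysis developed in the remainder of this section applied to Problem \ref{prob:perturbed}, or by a direct complementary-slackness argument from Lemmas \ref{lem:duality}--\ref{lem:etas} --- and only then run the sandwich. Everything else is bookkeeping, with the two care points being (i) checking that all the $O(\|q\|_\infty)$ constants are uniform in $q$ so they genuinely vanish as $\varepsilon_A\to0$, and (ii) reusing the controllability-based terminal correction cleanly in both Assumption \ref{assump:grand}.1 and the lower half of Claim 2.
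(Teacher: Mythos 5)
The paper does not actually prove this proposition from first principles: its entire proof is a citation, invoking Lemma~16 of Luo et al.\ for the persistence of Assumption~\ref{assump:grand} and Theorem~19 of Luo et al.\ for the two quantitative claims, then taking $\varepsilon_A$ to be the minimum of the two tolerances. Your attempt to reconstruct the result as a self-contained continuity argument in $q$ is therefore a genuinely different (and more informative) route, and most of its components are sound: the Lipschitz dependence of $\tilde A(q)$ on $q$, the openness of the rank conditions in Assumptions~\ref{assump:grand}.2--3, the controllability-based terminal correction for Assumption~\ref{assump:grand}.1, the compactness of the feasible set, and the derivation of Claim~1 from $G$ being affine with $G(x_{N+1}^*)=0$ are all correct in outline.

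However, the argument as written does not close, and the hole is exactly the one you flag in your last paragraph. Claim~2 compares $m(\tilde x_{N+1})+\sum_i l_i(\|u_i^*\|)$ to $m^*$, whereas the optimal value of Problem~\ref{prob:perturbed} is $m(x_{N+1}^*)+\sum_i l_i(\sigma_i^*)$; bridging the two requires $l_i(\|u_i^*\|)=l_i(\sigma_i^*)$ (up to $O(\|q\|_\infty)$) at every vertex, i.e.\ essentially LCvx at every vertex of the perturbed solution. This is not ``bookkeeping'': the whole point of Theorem~\ref{thm:big} is that LCvx can fail at up to $n_x+1$ vertices even after perturbation, and at such a vertex $\|u_i^*\|$ can sit strictly below $\rho_{\min}\le\sigma_i^*$ by an amount that does not shrink with $\|q\|_\infty$, so the substitution can change the cost by a fixed gap and the lower half of your sandwich fails. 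You would need either a separate argument bounding the total cost contribution of the violating vertices, or to establish tightness directly (a complementary-slackness argument via Lemmas~\ref{lem:duality}--\ref{lem:etas}), neither of which you carry out. A second, smaller defect: your treatment of Assumption~\ref{assump:grand}.4 concludes only that the implication survives ``for generic small $q$'' (an isolated exceptional set), which is weaker than the proposition's claim that it holds for \emph{all} $q_i\in[-\varepsilon_A,\varepsilon_A]$; you would need a continuity argument from the unperturbed instance (where Assumption~\ref{assump:grand}.4 is assumed) rather than a genericity one. As it stands the proposal is an incomplete proof with a correctly diagnosed but unresolved central step.
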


\begin{proof}
    Lemma 16 of \cite{luo} tells us that there exists some sufficiently small $\varepsilon_1 > 0$ such that Assumption \ref{assump:grand} still holds. Theorem 19 of \cite{luo} tells us that there exists some sufficiently small $\varepsilon_2 > 0$ that gives us the remainder of the result. Taking
    $$\varepsilon_A = \min \left\{\varepsilon_1, \varepsilon_2 \right\}$$
    gives our desired result.
\end{proof}

With this, we give our main result regarding LCvx guarantees at the vertices.
\begin{theorem} \label{thm:big}
    Suppose we solve Problem \ref{prob:perturbed} by sampling $q \in \mathbb{R}^d$ with
    $$q_i \overset{\mathrm{i.i.d.}}{\sim} \textnormal{Unif} [-\varepsilon_A, \varepsilon_A]$$
    If $\eta_N^* \ne 0$, then LCvx is violated on at most $n_x + 1$ vertices with probability 1.
\end{theorem}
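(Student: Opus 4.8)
\textit{Proof proposal.} The plan is to use Proposition \ref{prop:conditions} and Lemma \ref{lem:etas} to express the set of vertices that can violate LCvx in terms of the zero set of a single exponential‑polynomial sequence, and then annihilate that zero set using the genericity of the perturbed spectrum. Write $\tilde{A} := \tilde{A}(q)$. Applying Lemma \ref{lem:etas} to Problem \ref{prob:perturbed} gives $\eta_i^* = (\tilde{A}^\top)^{N-i}\eta_N^*$ for $i \in \{1,\dots,N\}$. Substituting into Proposition \ref{prop:conditions} and taking the contrapositive, LCvx can be violated at an interior vertex $i \in \{2,\dots,N\}$ only if $F(N-i)=0$, where $F(k) := \eta_N^{*\top}\tilde{A}^{k}(B_0 + \tilde{A} B_1)$ is a row vector of length $n_u$ defined for $k \ge 0$; it can be violated at $i=1$ only if $B_0^\top(\tilde{A}^\top)^{N-1}\eta_N^*=0$, and at $i=N+1$ only if $B_1^\top\eta_N^*=0$. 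Thus it suffices to prove that, with probability $1$, $F(k)=0$ for at most $n_x-1$ indices $k$; the two boundary vertices then contribute at most $2$ more, for $n_x+1$ in total.

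I would then record two facts about $F$. First, $F\not\equiv 0$: if $F(0)=\dots=F(n_x-1)=0$ then $\eta_N^*$ is orthogonal to every column of the controllability matrix of $(\tilde{A},\, B_0+\tilde{A} B_1)$, which has full row rank by Assumption \ref{assump:grand}.2 (still valid for Problem \ref{prob:perturbed} by Proposition \ref{prop:perturbed_traj}), so $\eta_N^*=0$, contradicting the hypothesis. Second, $F$ is an exponential polynomial of bounded order: letting $r \le n_x$ be the degree of the minimal polynomial of $\eta_N^*$ relative to $\tilde{A}^\top$, the row sequence $k\mapsto\eta_N^{*\top}\tilde{A}^{k}$ obeys the order‑$r$ linear recursion with that characteristic polynomial, whose roots are eigenvalues $\tilde{\lambda}_j$ of $\tilde{A}$; hence $F(k)=\sum_{(j,s)}c_{j,s}\,k^{s}\,\tilde{\lambda}_j^{\,k}$, where $(j,s)$ ranges over $r$ pairs (an eigenvalue index $j$ together with an exponent $0\le s<\nu_j$, $\sum_j\nu_j=r$) and each $c_{j,s}$ is a complex row vector of length $n_u$. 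By the first fact the matrix $C:=(c_{j,s})_{(j,s)}\in\mathbb{C}^{r\times n_u}$ (whose rows are the $c_{j,s}$) is nonzero.

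The heart of the argument is a confluent‑Vandermonde genericity step. If $F$ vanished at $r$ distinct indices $k_1<\dots<k_r$, then $\Psi C=0$, where $\Psi\in\mathbb{C}^{r\times r}$ has $(l,(j,s))$ entry $k_l^{\,s}\,\tilde{\lambda}_j^{\,k_l}$. I claim $\Psi$ is nonsingular with probability $1$: replacing the $\tilde{\lambda}_j$ by distinct positive reals $\mu_1,\dots,\mu_d$, the functions $k\mapsto k^{s}\mu_j^{k}$ form an extended Chebyshev system (a nonzero real combination has at most $r-1$ zeros, by a repeated–Rolle argument), so $\det\Psi\ne 0$ in that case; hence $\det\Psi$ is a polynomial in $(\tilde{\lambda}_1,\dots,\tilde{\lambda}_d)$ that is not identically zero, and since $q\mapsto(\tilde{\lambda}_1,\dots,\tilde{\lambda}_d)$ is affine (Definition \ref{def:perturbation}), the set $\{q : \det\Psi=0\}$ is Lebesgue‑null; a countable union over all index tuples $(k_1,\dots,k_r)$ and all admissible multiplicity patterns $(\nu_j)$ leaves a probability‑$1$ event on which every such $\Psi$ is nonsingular. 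On that event $\Psi C=0$ forces $C=0$, a contradiction, so $F$ vanishes at at most $r-1\le n_x-1$ indices, which proves the theorem. The step I expect to be the main obstacle is precisely this one, and within it the non‑diagonalizable case: one must show the \emph{confluent} generalized Vandermonde is not identically zero (the extended‑Chebyshev property of $\{k^{s}\mu_j^{k}\}$ is the clean tool), and one should also check the further measure‑zero exclusion that the perturbation of Definition \ref{def:perturbation} keeps the distinct eigenvalues $\tilde{\lambda}_j$ distinct and nonzero almost surely, so that they are genuine Vandermonde nodes.
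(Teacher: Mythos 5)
Your proposal is correct and reaches the stated bound by the same reduction the paper uses --- Lemma \ref{lem:etas} to propagate $\eta_i^* = (\tilde{A}^\top)^{N-i}\eta_N^*$, Proposition \ref{prop:conditions} to convert LCvx violation at an interior vertex into a zero of $F(N-i)$, and a separate count of $2$ for the boundary vertices $i=1$ and $i=N+1$. The difference is in the middle step: the paper simply cites Theorem 18 of Luo et al.\ for the claim that, under Assumptions \ref{assump:grand}.2--3 and the random spectral perturbation, $(B_0+\tilde{A}B_1)^\top\eta_i^*$ vanishes at no more than $n_x-1$ vertices almost surely, whereas you supply a self-contained proof of that genericity statement via the exponential-polynomial representation of $F$ and the nonsingularity of the confluent generalized Vandermonde matrix at almost every perturbed spectrum. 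Your argument is sound: $F\not\equiv 0$ follows from controllability of the perturbed pair (preserved per Proposition \ref{prop:perturbed_traj}); the bad set of $q$ is defined purely by $\det\Psi$ over the finitely many choices of node tuples and multiplicity patterns, so it does not depend on the (q-dependent) dual solution; and the restriction of the nonzero polynomial $\det\Psi$ to the real translate $q\mapsto(\lambda_1+q_1,\dots,\lambda_d+q_d)$ remains a nonzero polynomial in $q$, so its zero set is Lebesgue-null even when some $\lambda_j$ are complex. The two caveats you flag (almost-sure distinctness and nonvanishing of the $\tilde{\lambda}_j$ after perturbation) are indeed needed and are immediate measure-zero exclusions under the i.i.d.\ uniform sampling of Definition \ref{def:perturbation}. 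In short, your proof buys independence from the external reference at the cost of the Chebyshev-system machinery; the paper's proof buys brevity at the cost of opacity about where the $n_x-1$ and the probability-one qualifier actually come from.
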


\begin{proof}
    Given Assumptions \ref{assump:grand}.2 and \ref{assump:grand}.3, by Theorem 18 of \cite{luo}, we have that if $\eta_N^* \ne 0$, then $(B_0 + AB_1)^\top \eta_i^* = 0$ on at most $n_x - 1$ vertices with probability 1. By Proposition \ref{prop:conditions}, this guarantees LCvx holds at vertex $i$ when $2 \le i \le N$, but does not guarantee anything for $i = 1$ or $i = N + 1$. Thus, LCvx is violated on at most $n_x + 1$ vertices with probability 1.
\end{proof}

At this point it is worth noting that, in practice, the need for such a perturbation is almost never necessary, and the discussion of this perturbation method is purely for theoretical guarantees \cite{luo}.

For the remainder of this section, we discuss when $\eta_N^* \ne 0$. To do so, we first introduce the following lemma:
\begin{lemma} \label{lem:eta_N}
    If $\eta_N^* = 0$, then
    \begin{enumerate}
        \item $\sigma_i^* = \rho_{\min} \quad \forall i \in \{1, ..., N + 1\}$
        \item $x_{N+1}^* = \arg\min_{x} m(x) \textnormal{ s.t. } G(x) = 0$
    \end{enumerate}
\end{lemma}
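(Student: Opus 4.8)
The plan is to exploit the KKT stationarity conditions with respect to $\sigma$ and $u$, combined with the recursion $\eta_{i-1}^* = A^\top \eta_i^*$ from Lemma~\ref{lem:etas}. First I would observe that if $\eta_N^* = 0$, then by Lemma~\ref{lem:etas}.1 we get $\eta_{i}^* = 0$ for all $i \in \{1, \dots, N\}$ by backward induction (each $\eta_{i-1}^*$ is a linear image of $\eta_i^*$). So the dynamics multipliers vanish entirely. Plugging this into the Lagrangian of Definition~\ref{def:lagrangian}, the terms coupling $u$ and $\sigma$ through $\eta$ disappear, so the minimization over $(u,\sigma)$ in Lemma~\ref{lem:duality} decouples across vertices and reduces to $\min_{(u_i,\sigma_i)\in V} l_i(\sigma_i)$, with $u_i$ entirely free inside $V(\sigma_i)$.

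For part~1, since $l_i$ is monotonically increasing (it inherits this from $l_c$), the minimizer of $l_i(\sigma_i)$ over the feasible interval $[\rho_{\min},\rho_{\max}]$ (with the only coupling to $u_i$ being $\|u_i\|\le\sigma_i$, which is slack-free to satisfy by taking $u_i$ small) is $\sigma_i^* = \rho_{\min}$; this holds for every $i$. For part~2, I would use the KKT stationarity in $x_{N+1}$: from Lemma~\ref{lem:etas}.2, $\eta_N^* = \nabla m(x_{N+1}^*) + \mu_2^\top \nabla G(x_{N+1}^*)$, so $\eta_N^* = 0$ gives $\nabla m(x_{N+1}^*) + \mu_2^\top \nabla G(x_{N+1}^*) = 0$. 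This is precisely the stationarity (Lagrange) condition for the convex program $\min_x m(x)$ s.t. $G(x)=0$; since $m$ is convex $C^1$ and $G$ is affine, this condition is sufficient for global optimality, and combined with primal feasibility $G(x_{N+1}^*)=0$ we conclude $x_{N+1}^* = \arg\min_x m(x)$ s.t. $G(x)=0$.

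The main obstacle is making the decoupling argument for part~1 airtight: one must check that setting $\eta^* = 0$ genuinely removes all $u$- and $\sigma$-coupling in $\mathcal{L}$ so that each $(u_i,\sigma_i)$ can be minimized independently, and that the constraint $\|u_i\| \le \sigma_i$ does not force $\sigma_i$ above $\rho_{\min}$ — which it cannot, since $u_i$ is a free variable and we may take $u_i = 0$ (or any vector of norm $\le \rho_{\min}$), making $\sigma_i = \rho_{\min}$ feasible. A secondary subtlety is confirming the indicator $I_V$ contributes no obstruction to this choice, which is immediate from the definition of $V$. The rest is routine.
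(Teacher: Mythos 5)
Your proposal is correct and follows essentially the same route as the paper's proof: backward propagation of $\eta_N^*=0$ via Lemma~\ref{lem:etas}.1, decoupled minimization of $\sum_i l_i(\sigma_i)+I_V(u_i,\sigma_i)$ to force $\sigma_i^*=\rho_{\min}$, and the stationarity identity from Lemma~\ref{lem:etas}.2 combined with convexity for part~2. The subtlety you flag about $\|u_i\|\le\sigma_i$ not obstructing $\sigma_i=\rho_{\min}$ is handled implicitly in the paper by noting $u_i^*\in V(\rho_{\min})$; no gap either way.
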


\begin{proof}
    Repeatedly applying Lemma \ref{lem:etas}.1, we have $\eta_i^* = A^{\top (N - i)} \eta_N^*$. Thus, if $\eta_N^* = 0$, then $\eta_i^* = 0$ for all $i \in \{1, ..., N\}$. From Lemma \ref{lem:duality}, we have
    \begin{align*}
        (u^*, \sigma^*) &= \arg\min_{\sigma, u} \mathcal{L}(x^*, u, \sigma, \eta^*, \mu_1^*, \mu_2^*)\\
        &= \arg\min_{\sigma, u} \sum_{i=1}^{N+1} l_i\left( \sigma_i \right) + \sum_{i=1}^{N+1} I_V(u_i, \sigma_i)
    \end{align*}
    Since $l_i$ is monotonically increasing, we have $\sigma_i^* = \rho_{\min}$ and $u_i^* \in V(\rho_{\min})$ for all $i \in \{1, ..., N + 1\}$.
    
    Next, by Lemma \ref{lem:etas}.2, we have
    $$\eta_N^* = \nabla m(x_{N+1}^*) + \mu_2^\top \nabla G(x_{N+1}^*) = 0$$
    
    We see that $x_{N+1}^*$ satisfies the KKT conditions of optimality for the convex optimization problem
    $$\min_{x} m(x) \textnormal{ s.t. } G(x) = 0$$
    and is thus a minimizer.
\end{proof}

Using this lemma, we can now give a more intuitive condition for when $\eta_N \ne 0$. For a fixed Problem \ref{prob:perturbed}, consider different values of $\rho_{\min}$:
\begin{proposition} \label{prop:rho_max}
    There exists $\rho \in (0, \rho_{\max}]$ such that if $\rho_{\min} < \rho$ then $\eta_N^* \ne 0$.
\end{proposition}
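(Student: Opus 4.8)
The plan is to prove the statement in contrapositive form: to exhibit a threshold $\rho$ below which no solution of Problem~\ref{prob:perturbed} can have $\eta_N^* = 0$. The mechanism is Lemma~\ref{lem:eta_N}: if $\eta_N^* = 0$ then $\sigma_i^* = \rho_{\min}$ for every $i$, hence $\|u_i^*\| \le \rho_{\min}$, and moreover $x_{N+1}^*$ must equal $x^\sharp$, the minimizer of $m$ over the affine set $\{x : G(x) = 0\}$ --- a point that does not depend on $\rho_{\min}$. In other words, a control profile of magnitude at most $\rho_{\min}$ would have to drive $x_\textnormal{init}$ \emph{exactly} onto the fixed target $x^\sharp$. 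But as $\rho_{\min}$ shrinks the set of terminal states reachable in $N$ steps collapses onto the single uncontrolled point $A^N x_\textnormal{init}$ (here $A$ denotes the perturbed matrix $\tilde{A}(q)$ of Problem~\ref{prob:perturbed}, fixed throughout), and Assumption~\ref{assump:grand}.4 forces $A^N x_\textnormal{init} \ne x^\sharp$; quantifying the rate of this collapse produces $\rho$.

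Concretely I would carry out four steps. \textbf{(1)}~Unroll the discretized dynamics $x_{i+1} = A x_i + B_0 u_i + B_1 u_{i+1}$ with $x_1 = x_\textnormal{init}$ to obtain $x_{N+1} = A^N x_\textnormal{init} + \sum_{i=1}^{N+1} C_i u_i$ for fixed matrices $C_i$ depending only on $A, B_0, B_1$, and set $M := \sum_{i=1}^{N+1}\|C_i\|$ (operator norm). \textbf{(2)}~Combine with Lemma~\ref{lem:eta_N}: if $\eta_N^* = 0$ then $\|u_i^*\| \le \rho_{\min}$ and $x_{N+1}^* = x^\sharp$, so $\|x^\sharp - A^N x_\textnormal{init}\| = \bigl\|\sum_{i} C_i u_i^*\bigr\| \le M\rho_{\min}$. \textbf{(3)}~Show $x^\sharp \ne A^N x_\textnormal{init}$. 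If they were equal then $G(A^N x_\textnormal{init}) = G(x^\sharp) = 0$, so the trajectory with $u_i \equiv 0$ and $\sigma_i \equiv \rho_{\min}$ is feasible with terminal state $x^\sharp$; since $x^\sharp$ minimizes $m$ over $\{x : G(x) = 0\}$ and each $l_i$ is monotonically increasing, every feasible trajectory has cost at least $m(x^\sharp) + \sum_{i=1}^{N+1} l_i(\rho_{\min})$, a value this zero-control trajectory attains. Hence it is a solution whose terminal state is the minimizer of $m$ subject to $G = 0$ yet with all $\|u_i\| = 0$, contradicting Assumption~\ref{assump:grand}.4. Therefore $c := \|x^\sharp - A^N x_\textnormal{init}\| > 0$, and $c$ and $M$ are both independent of $\rho_{\min}$. \textbf{(4)}~Set $\rho := \min\{c/M,\ \rho_{\max}\} \in (0, \rho_{\max}]$, with the convention $c/M := +\infty$ if $M = 0$. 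If $\rho_{\min} < \rho$ then $M\rho_{\min} < c$, so step (2) would force $c \le M\rho_{\min} < c$, a contradiction; hence $\eta_N^* \ne 0$.

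Two minor caveats should be checked along the way. First, the argument presupposes that $x^\sharp$ exists, i.e.\ that $m$ attains its minimum over the affine set $\{x : G(x) = 0\}$; if it does not, then by Lemma~\ref{lem:eta_N}.2 no solution can have $\eta_N^* = 0$ for \emph{any} $\rho_{\min}$, and $\rho = \rho_{\max}$ trivially works. Second, for Lemma~\ref{lem:eta_N} to apply one should note Problem~\ref{prob:perturbed} does admit a solution at the relevant $\rho_{\min}$: its feasible set is nonempty by Assumption~\ref{assump:grand}.1, closed, and bounded since $\|u_i\| \le \sigma_i \le \rho_{\max}$. The genuinely load-bearing step is \textbf{(3)} --- converting the somewhat abstract Assumption~\ref{assump:grand}.4 into the clean separation $x^\sharp \ne A^N x_\textnormal{init}$ through optimality of the zero-control trajectory; steps (1), (2) and (4) are routine bookkeeping.
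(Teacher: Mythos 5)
Your proof is correct and takes a genuinely different route from the paper's. The paper argues directly via the contrapositive of Lemma~\ref{lem:eta_N} in two cases: if $x_{N+1}^*$ is not the minimizer of $m$ over $\{G=0\}$ it takes $\rho = \rho_{\max}$; otherwise it invokes Assumption~\ref{assump:grand}.4 to get some $\|u_i^*\| > 0$ and sets $\rho = \|u_i^*\|/2$, so that $\rho_{\min} < \rho < \|u_i^*\| \le \sigma_i^*$ contradicts $\sigma_i^* = \rho_{\min}$. That argument is shorter but leans on a quantity $\|u_i^*\|$ that itself depends on the solution, and hence implicitly on $\rho_{\min}$; your reachability argument avoids this by producing constants $c = \|x^\sharp - A^N x_\textnormal{init}\|$ and $M$ that are manifestly independent of $\rho_{\min}$, which makes the quantifier structure of the statement (``there exists $\rho$ such that for all $\rho_{\min} < \rho$'') unambiguous. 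Your step~(3), extracting the separation $x^\sharp \ne A^N x_\textnormal{init}$ from Assumption~\ref{assump:grand}.4 by showing the zero-control trajectory would otherwise be optimal, is a nice reformulation of what that assumption is really buying, and your two caveats (existence of $x^\sharp$, attainment of the minimum of Problem~\ref{prob:perturbed}) are handled correctly. The only point worth tightening is that $\arg\min_x m(x)$ s.t.\ $G(x)=0$ need not be a singleton; in that case replace $c$ by the distance from $A^N x_\textnormal{init}$ to the (closed convex) set of minimizers, which is positive exactly when $A^N x_\textnormal{init}$ is not itself a minimizer --- the same argument as your step~(3) establishes this, and the rest goes through unchanged. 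The paper glosses over the same non-uniqueness, so this is a shared, minor issue rather than a gap in your proof.
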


\begin{proof}
    For this proof, we divide into two cases:
    
    Case 1: $x_{N+1}^* \ne \arg\min_{x} m(x) \textnormal{ s.t. } G(x) = 0$, implying $\eta_N^* \ne 0$ by the contrapositive of Lemma \ref{lem:eta_N}. Since this holds for any $\rho_{\min} < \rho_{\max}$, we can simply take $\rho = \rho_{\max}$.
    
    Case 2: $x_{N+1}^* = \arg\min_{x} m(x) \textnormal{ s.t. } G(x) = 0$. By Assumption \ref{assump:grand}.4, there is some $\|u_i^*\| > 0$. Choose $\rho = \|u_i^*\| / 2$. Then for all $\rho_{\min} < \rho$,
    $$\rho_{\min} < \rho < \|u_i^*\| \le \sigma_i^*$$
    Thus, by the contrapositive of Lemma \ref{lem:eta_N}, $\eta_N^* \ne 0$.
\end{proof}
Intuitively, Proposition \ref{prop:rho_max} says that if $\rho_{\min}$ is sufficiently small, then $\eta_N^* \ne 0$. As a note, \cite{luo} makes a similar remark, observing that $\eta_N^* \ne 0$ if $t_f$ is sufficiently small. Using Proposition \ref{prop:rho_max}, we include the following helper definition:
\begin{definition} \label{def:rho_max}
    Let
    $$\rho_{\min}^+ = \sup\{\rho \in (0, \rho_{\max}] : \rho_{\min} < \rho \implies \eta_N^* \ne 0\}$$
\end{definition}
This upper bound will prove useful in our algorithm development in the next section.

\section{Lossless Convexification along the Edges}
At this point, we have proven sufficient conditions for LCvx to hold at the vertices of our trajectory. In this section, we present an algorithm that extends this guarantee to the edges of our trajectory.

First, we again define a new problem. Given a perturbed lower bound on our control norm $\tilde{\rho}_{\min} \in [\rho_{\min}, \rho_{\max})$, define:
\begin{problem} \label{prob:delta}
    \begin{align*}
        \min _{x, u, \sigma} \quad & m\left(x_{N+1}\right) + \sum_{i = 1}^{N+1} l_i\left(\sigma_i \right)\\
       \operatorname{s.t.} \quad & x_{i+1} = \tilde{A}(q) x_{i} + B_0 u_i + B_1 u_{i+1}, \quad \forall i \in \{1, ..., N\} \\
        & \tilde{\rho}_{\min} \le \sigma_i \le \rho_{\max}, \quad \|u_i\| \le \sigma_i \quad \forall i \in \{1, ..., N+1\}\\
        & x_1 = x_\textnormal{init}, \quad G\left(x_{N+1}\right) = 0\\
        & \|u_{i+1}-u_i\| \le \delta(\tilde{\rho}_{\min}) \quad \forall i \in \{1, ..., N\}
    \end{align*}
\end{problem}
where
$$\delta(\tilde{\rho}_{\min}) = 2\sqrt{\tilde{\rho}_{\min}^2 - \rho_{\min}^2}$$

Note that $\tilde{\rho}_{\min}$ replaced $\rho_{\min}$ and we introduced an additional rate constraint $\|u_{i+1}-u_i\| \le \delta(\tilde{\rho}_{\min})$. The purpose of this modification is illustrated in the following result:
\begin{lemma} \label{lem:edges}
    If $\tilde{\rho}_{\min} \le \|u_i\| \le \rho_{\max}$ and $\tilde{\rho}_{\min} \le \|u_{i+1}\| \le \rho_{\max}$ with $\|u_{i+1}-u_i\| \le \delta(\tilde{\rho}_{\min})$, then $\rho_{\min} \le \|u(t)\| \le \rho_{\max}$ along edge $\{i, i+1\}$.
\end{lemma}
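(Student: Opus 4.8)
The plan is to use the fact that along edge $\{i,i+1\}$ the control is the convex combination $u(t) = (1-s)\,u_i + s\,u_{i+1}$ with $s = (t - t_i)/(t_{i+1}-t_i)\in[0,1]$, so that, as $t$ ranges over $[t_i,t_{i+1}]$, $u(t)$ traces out the line segment from $u_i$ to $u_{i+1}$. It therefore suffices to bound $\|w\|$ for every point $w$ on this segment. The upper bound is immediate from the triangle inequality together with $s,1-s\ge 0$: $\|w\| \le (1-s)\|u_i\| + s\|u_{i+1}\| \le (1-s)\rho_{\max} + s\rho_{\max} = \rho_{\max}$. All the work is in the lower bound $\|w\|\ge\rho_{\min}$; equivalently, one must show the segment does not enter the open ball of radius $\rho_{\min}$ about the origin.

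For the lower bound I would let $w^\star$ denote the point of the segment nearest the origin, so that $\|w\|\ge\|w^\star\|$ for all $w$ on the segment and it is enough to prove $\|w^\star\|\ge\rho_{\min}$. If $u_i = u_{i+1}$ the segment is the single point $u_i$, of norm at least $\tilde{\rho}_{\min}\ge\rho_{\min}$, so assume $u_i\ne u_{i+1}$. Then $w^\star$ is either an endpoint of the segment or the foot of the perpendicular dropped from the origin onto the line through $u_i$ and $u_{i+1}$. In the endpoint case $\|w^\star\|\in\{\|u_i\|,\|u_{i+1}\|\}\ge\tilde{\rho}_{\min}\ge\rho_{\min}$ and we are done.

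So suppose $w^\star$ is the perpendicular foot. Set $h = \|w^\star\|$, $a = \|u_i - w^\star\|$, and $b = \|u_{i+1} - w^\star\|$. Since $u_i - w^\star$ and $u_{i+1} - w^\star$ are scalar multiples of $u_{i+1}-u_i$, which is orthogonal to $w^\star$ by definition of the perpendicular foot, we get $(u_i - w^\star)^\top w^\star = (u_{i+1}-w^\star)^\top w^\star = 0$, hence the Pythagorean identities $\|u_i\|^2 = h^2 + a^2$ and $\|u_{i+1}\|^2 = h^2 + b^2$; moreover, since $w^\star$ lies on the segment between its endpoints, $a + b = \|u_{i+1}-u_i\| \le \delta(\tilde{\rho}_{\min})$. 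I would finish by contradiction: if $h < \rho_{\min}$, then $h < \rho_{\min}\le\tilde{\rho}_{\min}$, so $a = \sqrt{\|u_i\|^2 - h^2}\ge\sqrt{\tilde{\rho}_{\min}^2 - h^2} > \sqrt{\tilde{\rho}_{\min}^2 - \rho_{\min}^2}$ and likewise $b > \sqrt{\tilde{\rho}_{\min}^2 - \rho_{\min}^2}$, whence $a + b > 2\sqrt{\tilde{\rho}_{\min}^2 - \rho_{\min}^2} = \delta(\tilde{\rho}_{\min})$, a contradiction. Therefore $h\ge\rho_{\min}$, which gives the lower bound.

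I do not anticipate a genuine obstacle: the whole content is the elementary geometric observation that $\delta(\tilde{\rho}_{\min}) = 2\sqrt{\tilde{\rho}_{\min}^2 - \rho_{\min}^2}$ is exactly the length of the longest segment whose endpoints lie on the sphere of radius $\tilde{\rho}_{\min}$ and that stays outside the ball of radius $\rho_{\min}$ (a chord internally tangent to that inner ball). The only point requiring care is to treat the endpoint-versus-perpendicular-foot dichotomy and the degenerate case $u_i = u_{i+1}$ explicitly, as above, so that the reduction of ``LCvx along the edge'' to the single inequality $\|w^\star\|\ge\rho_{\min}$ is airtight; the contradiction argument is phrased so as to avoid having to separately track the sign of $\tilde{\rho}_{\min}^2 - h^2$.
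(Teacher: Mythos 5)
Your proof is correct, and it follows the same overall reduction as the paper's: bound $\|u(t)\|$ from above by convexity of the norm, and for the lower bound split on whether the nearest point of the segment to the origin is an endpoint or the perpendicular foot on the line through $u_i$ and $u_{i+1}$. Where you differ is in the final estimate. The paper fixes $\|u_i\| \le \|u_{i+1}\|$ WLOG, writes the perpendicular distance as $\sqrt{\|u_i\|^2 - p^2}$ with $p = |u_i^\top(u_{i+1}-u_i)|/\|u_{i+1}-u_i\|$, and then needs a short sign analysis (via the law of cosines and the triangle inequality) to show $p \le \|u_{i+1}-u_i\|/2 \le \delta(\tilde{\rho}_{\min})/2$. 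You instead argue symmetrically: with $h = \|w^\star\|$ and legs $a, b$ from the foot to the two endpoints, Pythagoras gives $a = \sqrt{\|u_i\|^2 - h^2}$, $b = \sqrt{\|u_{i+1}\|^2 - h^2}$, and $a + b = \|u_{i+1}-u_i\| \le \delta(\tilde{\rho}_{\min})$, so $h < \rho_{\min}$ would force $a + b > 2\sqrt{\tilde{\rho}_{\min}^2 - \rho_{\min}^2}$, a contradiction. This buys you a cleaner argument -- no WLOG, no bookkeeping of the sign of $u_i^\top(u_{i+1}-u_i)$ -- and makes transparent the underlying geometric fact that $\delta(\tilde{\rho}_{\min})$ is exactly the length of the longest chord with endpoints outside the $\tilde{\rho}_{\min}$-sphere that stays clear of the open $\rho_{\min}$-ball. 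Your explicit handling of the degenerate case $u_i = u_{i+1}$ is also a small completeness gain over the paper, whose projection formula implicitly assumes $u_{i+1} \ne u_i$.
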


\begin{proof}
    See Appendix B.
    \renewcommand{\qedsymbol}{}
\end{proof}

Although this new rate constraint seems helpful, its addition fundamentally changes the structure of our optimization problem and thus we do not have any of the LCvx guarantees at the vertices from the previous section. In order to address this, we begin with the following assumption:

\begin{assumption} \label{assump:existence}
    There exists $\rho \in [0, \rho_{\min}^+)$ such that if we solve Problem \ref{prob:delta} without the rate constraint and $\tilde{\rho}_{\min} > \rho$, then
    $$\delta(\tilde{\rho}_{\min}) > \max_{1 \le i \le N} \|u_{i+1}^* - u_i^*\|$$
\end{assumption}

In other words, if $\tilde{\rho}_{\min}$ is sufficiently large then the rate constraint is redundant (i.e. introduces no additional restrictions to our problem), ensuring all previous results still hold. We purposefully weaken the implication in the above assumption to define an even smaller lower bound:
\begin{definition} \label{def:rho_min}
    Let
    $$\rho_{\min}^- = \inf\{\rho \in [0, \rho_{\max}) : \tilde{\rho}_{\min} > \rho \implies \text{Theorem \ref{thm:big} holds}\}$$
\end{definition}

We summarize the previous results into the following proposition establishing the existence of an interval over which we have LCvx guarantees along the edges:
\begin{proposition} \label{prop:interval}
    There exists
    $$\rho_{\min} \le \rho_{\min}^- < \rho_{\min}^+ \le \rho_{\max}$$
    such that if we solve Problem \ref{prob:delta} with $\tilde{\rho}_{\min} \in (\rho_{\min}^-, \rho_{\min}^+)$ (and $q \in \mathbb{R}^d$ sampled according to Theorem \ref{thm:big}) then LCvx is volated along at most $2n_x + 2$ edges.
\end{proposition}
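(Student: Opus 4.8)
The plan is to assemble Proposition~\ref{prop:interval} directly from the definitions of $\rho_{\min}^-$ and $\rho_{\min}^+$ together with Lemma~\ref{lem:edges} and Theorem~\ref{thm:big}. First I would verify the chain of inequalities $\rho_{\min} \le \rho_{\min}^- < \rho_{\min}^+ \le \rho_{\max}$. The bound $\rho_{\min} \le \rho_{\min}^-$ is immediate since $\tilde{\rho}_{\min}$ ranges over $[\rho_{\min}, \rho_{\max})$; the bound $\rho_{\min}^+ \le \rho_{\max}$ is built into Definition~\ref{def:rho_max}. For $\rho_{\min}^- < \rho_{\min}^+$, I would combine Assumption~\ref{assump:existence} (which supplies a $\rho < \rho_{\min}^+$ above which the rate constraint is vacuous on the unconstrained solution, so that the unperturbed-rate Problem~\ref{prob:delta} coincides with Problem~\ref{prob:perturbed} and hence all hypotheses of Theorem~\ref{thm:big} carry over) with Definition~\ref{def:rho_min} (which takes the infimum of exactly those thresholds), giving $\rho_{\min}^- \le \rho < \rho_{\min}^+$.

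Next, fix any $\tilde{\rho}_{\min} \in (\rho_{\min}^-, \rho_{\min}^+)$ and sample $q$ as in Theorem~\ref{thm:big}. Since $\tilde{\rho}_{\min} > \rho_{\min}^-$, Definition~\ref{def:rho_min} guarantees the conclusion of Theorem~\ref{thm:big} applies to Problem~\ref{prob:delta}: we must check its antecedent $\eta_N^* \ne 0$, which follows because $\tilde{\rho}_{\min} < \rho_{\min}^+$ and Definition~\ref{def:rho_max} says precisely that $\tilde{\rho}_{\min} < \rho_{\min}^+$ forces $\eta_N^* \ne 0$. (Here I am using that with the rate constraint inactive, the Lagrangian analysis of Section~2 — Lemmas~\ref{lem:duality}, \ref{lem:etas}, \ref{lem:eta_N} and Proposition~\ref{prop:rho_max} — transfers verbatim to Problem~\ref{prob:delta} with $\rho_{\min}$ replaced by $\tilde{\rho}_{\min}$.) Therefore LCvx with respect to the \emph{perturbed} bound $\tilde{\rho}_{\min}$ is violated at at most $n_x+1$ vertices with probability $1$; equivalently, at all but at most $n_x+1$ vertices $i$ we have $\tilde{\rho}_{\min} \le \|u_i^*\| \le \rho_{\max}$.

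Finally I would promote this vertex-level guarantee to an edge-level guarantee. Call a vertex \emph{bad} if $\|u_i^*\| < \tilde{\rho}_{\min}$ or $\|u_i^*\| > \rho_{\max}$; there are at most $n_x+1$ of these. An edge $\{i,i+1\}$ with both endpoints good satisfies the hypotheses of Lemma~\ref{lem:edges} — the two norm bounds hold by goodness, and $\|u_{i+1}^*-u_i^*\| \le \delta(\tilde{\rho}_{\min})$ is enforced by the rate constraint of Problem~\ref{prob:delta} — so LCvx holds along that edge. Hence the only edges that can violate LCvx are those incident to a bad vertex, and since each vertex is incident to at most two edges, there are at most $2(n_x+1) = 2n_x+2$ such edges. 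The main obstacle I anticipate is purely bookkeeping: making explicit that the rate constraint with radius $\delta(\tilde{\rho}_{\min})$ does not disturb the derivation of Theorem~\ref{thm:big} on the interval in question (this is the content of Assumption~\ref{assump:existence} and Definition~\ref{def:rho_min}, so it reduces to citing them correctly), after which the edge count is a one-line incidence argument.
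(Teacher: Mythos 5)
Your proposal is correct and follows essentially the same route as the paper's proof: establish the interval $(\rho_{\min}^-, \rho_{\min}^+)$ from Proposition~\ref{prop:rho_max}, Assumption~\ref{assump:existence}, and Definitions~\ref{def:rho_max}--\ref{def:rho_min}; invoke Theorem~\ref{thm:big} to bound the bad vertices by $n_x+1$; and apply Lemma~\ref{lem:edges} with a two-edges-per-bad-vertex incidence count to get $2n_x+2$. Your write-up is somewhat more explicit than the paper's (e.g., spelling out why the rate constraint does not disturb the Section~2 Lagrangian analysis), but the argument is the same.
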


\begin{proof}
    Proposition \ref{prop:rho_max} and Assumption \ref{assump:existence} proves existence of $\rho_{\min} \le \rho_{\min}^- < \rho_{\min}^+ \le \rho_{\max}$. By Definition \ref{def:rho_max} and \ref{def:rho_min}, if $\tilde{\rho}_{\min} \in (\rho_{\min}^-, \rho_{\min}^+)$ then we can invoke Theorem \ref{thm:big} with $\eta_N^* \ne 0$. Thus, solving Problem \ref{prob:delta} violates $\tilde{\rho}_{\min} \le \|u_i^*\| \le \rho_{\max}$ on at most $n_x + 1$ vertices.
    
    Lemma \ref{lem:edges} only holds when both $\tilde{\rho}_{\min} \le \|u_i^*\| \le \rho_{\max}$ and $\tilde{\rho}_{\min} \le \|u_{i+1}^*\| \le \rho_{\max}$. Thus, if our nonconvex constraint is violated at vertex $i$, then LCvx may be violated along edges $\{i-1, i\}$ and/or $\{i, i+1\}$. Since there are at most $n_x + 1$ vertices where this can happen, we have at most $2n_x + 2$ neighboring edges where LCvx may be violated.
\end{proof}

See Figure \ref{fig:rho} for a visualization of $(\rho_{\min}^-, \rho_{\min}^+)$ on a real example. Finally, given the existence of this ``feasible" interval, we can search for such an interval using the previous conditions and then minimize the cost within the interval using a ternary search. This gives rise to Algorithm \ref{alg:pllcvx}.

\begin{algorithm}[t]
    \caption{Picewise Linear Lossless Convexification} \label{alg:pllcvx}
    \begin{algorithmic}[1]
        \Require $A_c, B_c, \rho_{\min}, \rho_{\max}, \varepsilon_A, \varepsilon$
        \State $(A, B_0, B_1) \gets \text{integrate } A_c \text{ and } B_c$
        \State $q \gets \text{sample } q_i \sim \textnormal{Unif} [-\varepsilon_A, \varepsilon_A] \text{ for } i \in \{1, ..., d\}$
        \State $\tilde{A}(q) \gets \text{perturb } A \text{ with } q$
        \State $\rho_\text{low} \gets \rho_{\min}$
        \State $\rho_\text{high} \gets \rho_{\max}$
        \While{$\rho_\text{high} - \rho_\text{low} > \varepsilon$}
            \State $\rho_1 \gets \rho_\text{low} + (\rho_\text{high} - \rho_\text{low})/3$
            \State $\rho_2 \gets \rho_\text{high} - (\rho_\text{high} - \rho_\text{low})/3$
            \State $(\text{cost}_1^*, u_1^*, \eta_1^*) \gets \text{solve Problem \ref{prob:delta} for } \tilde{\rho}_{\min} = \rho_1$
            \State $(\text{cost}_2^*, u_2^*, \eta_2^*) \gets \text{solve Problem \ref{prob:delta} for } \tilde{\rho}_{\min} = \rho_2$
            \If{$(\eta_1^*)_N \ne 0 \land \sum_i \mathbf{1}(\|(u_1^*)_i\| < \rho_{\min}) > n_x + 1$}
                \State $\rho_\text{low} \gets \rho_1$
            \ElsIf{$(\eta_2^*)_N = 0$}
                \State $\rho_\text{high} \gets \rho_2$
            \Else
                \If{$\text{cost}_1^* > \text{cost}_2^*$}
                    \State $\rho_\text{low} \gets \rho_1$
                \Else
                    \State $\rho_\text{high} \gets \rho_2$
                \EndIf
            \EndIf
        \EndWhile
        \State $(x^*, u^*) \gets \text{solve Problem \ref{prob:delta} for } \tilde{\rho}_{\min} = (\rho_\text{low} + \rho_\text{high}) / 2$
        \State \Return $(x^*, u^*)$
    \end{algorithmic}
\end{algorithm}

\begin{theorem} \label{thm:algo}
    Given a sufficiently small $\varepsilon > 0$, Algorithm \ref{alg:pllcvx} finds a minimizing solution to Problem \ref{prob:delta} violating LCvx along at most $2n_x + 2$ edges in the trajectory in
    $$O\left(\log\left(\frac{\rho_{\max} - \rho_{\min}}{\varepsilon}\right)\right)$$
    calls to our solver.
\end{theorem}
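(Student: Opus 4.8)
The plan is to verify the two claims of the theorem separately: (i) correctness — the returned trajectory violates LCvx along at most $2n_x + 2$ edges — and (ii) complexity — the while loop terminates after $O(\log((\rho_{\max} - \rho_{\min})/\varepsilon))$ iterations, each using a constant number of solver calls. The complexity claim is the easy half: each pass through the loop shrinks the interval $[\rho_{\text{low}}, \rho_{\text{high}}]$ by a factor of $2/3$ (in every branch, one of the endpoints is moved to $\rho_1$ or $\rho_2$, which are at the $1/3$ and $2/3$ points), so after $k$ iterations the width is $(2/3)^k(\rho_{\max}-\rho_{\min})$, and the loop exits once this drops below $\varepsilon$; solving for $k$ gives $k = O(\log((\rho_{\max}-\rho_{\min})/\varepsilon))$, and each iteration makes exactly two solver calls plus one final call after the loop, giving the stated bound.

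For correctness, the key is to show that the loop invariant $\rho_{\text{low}} < \rho_{\min}^- < \rho_{\min}^+ < \rho_{\text{high}}$ is preserved — more precisely, that the interval $[\rho_{\text{low}}, \rho_{\text{high}}]$ always contains the feasible window $(\rho_{\min}^-, \rho_{\min}^+)$ whose existence is guaranteed by Proposition \ref{prop:interval}. Initially $\rho_{\text{low}} = \rho_{\min} \le \rho_{\min}^-$ and $\rho_{\text{high}} = \rho_{\max} \ge \rho_{\min}^+$, so the invariant holds. I would then check each branch of the conditional. If $(\eta_1^*)_N \ne 0$ and more than $n_x+1$ vertices violate the original bound at $\tilde\rho_{\min} = \rho_1$, then by the contrapositive of Theorem \ref{thm:big} (together with Definition \ref{def:rho_min}) we must have $\rho_1 \le \rho_{\min}^-$, so moving $\rho_{\text{low}}$ up to $\rho_1$ keeps $\rho_{\min}^-$ inside the interval. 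If $(\eta_2^*)_N = 0$, then by Definition \ref{def:rho_max} we have $\rho_2 \ge \rho_{\min}^+$, so moving $\rho_{\text{high}}$ down to $\rho_2$ is safe. In the remaining (``Else'') branch, both $\rho_1$ and $\rho_2$ behave like points of the feasible interval for the purpose of the guarantees, and the ternary-search comparison $\text{cost}_1^* \gtrless \text{cost}_2^*$ discards whichever third cannot contain the cost minimizer: since the optimal cost of Problem \ref{prob:delta} is monotone in $\tilde\rho_{\min}$ (a larger lower bound shrinks the feasible set), the cost is unimodal on the window, and the standard ternary-search argument shows the minimizer stays in the retained interval. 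Because the retained interval always contains a point of $(\rho_{\min}^-, \rho_{\min}^+)$ and has width eventually below $\varepsilon$, for $\varepsilon$ sufficiently small the final midpoint $(\rho_{\text{low}}+\rho_{\text{high}})/2$ lies in $(\rho_{\min}^-, \rho_{\min}^+)$; by Proposition \ref{prop:interval}, solving Problem \ref{prob:delta} there violates LCvx along at most $2n_x+2$ edges, and by unimodality it is (within $\varepsilon$-tolerance, hence exactly in the limit of the argument) cost-minimizing.

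The main obstacle is the ``Else'' branch: one must argue both that the cost is genuinely unimodal (monotone non-decreasing) as a function of $\tilde\rho_{\min}$ over the relevant range — which follows from feasible-set monotonicity of Problem \ref{prob:delta}, but should be stated carefully since the rate constraint $\delta(\tilde\rho_{\min})$ also moves with $\tilde\rho_{\min}$ — and that discarding a third based on the two sampled costs never ejects the minimizer, which is the classical ternary-search correctness lemma but needs the points $\rho_1 < \rho_2$ to be genuinely interior. A secondary subtlety is that the ``feasible window'' $(\rho_{\min}^-, \rho_{\min}^+)$ is only shown to be nonempty, not to have width bounded below, so the phrase ``sufficiently small $\varepsilon$'' in the hypothesis is doing real work: $\varepsilon$ must be smaller than the window width for the final midpoint to be guaranteed inside it. I would make this dependence explicit when invoking Proposition \ref{prop:interval} at the end.
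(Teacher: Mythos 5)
Your overall structure matches the paper's: the branch conditions are justified the same way (contrapositive of Theorem \ref{thm:big} together with Definition \ref{def:rho_min} for the first branch, Definition \ref{def:rho_max} for the second), the interval invariant plus the $(2/3)^k$ shrinkage with two solver calls per iteration gives the stated complexity, and ``sufficiently small $\varepsilon$'' is doing exactly the work you identify (the window $(\rho_{\min}^-, \rho_{\min}^+)$ is only shown nonempty, not wide). On those points your write-up is, if anything, more careful than the paper's own proof.

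The one genuine gap is your justification of unimodality in the \emph{Else} branch. You derive it from ``feasible-set monotonicity of Problem \ref{prob:delta},'' but the feasible set is \emph{not} monotone in $\tilde\rho_{\min}$: raising $\tilde\rho_{\min}$ tightens $\tilde\rho_{\min}\le\sigma_i$ while simultaneously \emph{relaxing} the rate constraint, since $\delta(\tilde\rho_{\min})=2\sqrt{\tilde\rho_{\min}^2-\rho_{\min}^2}$ is increasing. The feasible sets are therefore not nested, the optimal cost need not be monotone (Figure \ref{fig:rho} shows an interior minimum, which is precisely why the algorithm runs a ternary rather than a binary search), and the argument you sketch fails. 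You flag this subtlety yourself but leave it unresolved. The paper takes a different route: it invokes convexity of the optimal value of a convex problem under perturbation of its constraints (Boyd, Section 5.6.1). To make that rigorous here, note that the $\sigma$-bound perturbation enters affinely in $\tilde\rho_{\min}$, the rate-constraint right-hand side $\delta(\tilde\rho_{\min})$ is concave in $\tilde\rho_{\min}$, and the perturbation value function is jointly convex and nonincreasing in the constraint slacks, so the composition is convex in $\tilde\rho_{\min}$ and hence unimodal. Substituting that convexity argument for your monotonicity claim closes the gap; the rest of your proposal stands.
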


\begin{proof}
    Let $\tilde{\rho}_{\min} \in [\rho_{\min}, \rho_{\max}]$. If $\eta_N^* \ne 0$ and LCvx is violated at more than $n_x + 1$ then Theorem 1 is violated and thus $\tilde{\rho}_{\min} \le \rho_{\min}^-$ by Definition \ref{def:rho_min}. If $\eta_N = 0$ then $\tilde{\rho}_{\min} \ge \rho_{\min}^+$ by definition \ref{def:rho_max}. Furthermore, by Proposition \ref{prop:interval}, we can initialize our ternary search with $\rho_\text{low} = \rho_{\max}$ and $\rho_\text{high} = \rho_{\max}$.

    Furthermore, by section 5.6.1 of \cite{boyd}, our cost is convex (i.e.  unimodal) with respect to $\tilde{\rho}_{\min}$. Thus, once $\rho_\text{low}$ and $\rho_\text{high}$ fall within our ``feasible" interval (guaranteed by sufficiently small $\varepsilon > 0$), our algorithm is guaranteed to converge to the optimal cost in $O(\log(\Delta\rho/\varepsilon))$ steps.
\end{proof}

\section{Results}

In this section, we apply our algorithm to the classical double-integrator trajectory optimization problem, where the system is an agent controlled solely through acceleration and the objective is to minimize the total control effort. Let $x = (p_x, p_y, p_z, v_x, v_y, v_z)^\top$ be our state vector and $u = (a_x, a_y, a_z)^\top$ be our control vector where $p \in \mathbb{R}^3$ is position, $v \in \mathbb{R}^3$ is velocity, and $a \in \mathbb{R}^3$ is acceleration. Our dynamical system can be expressed as:

$$\begin{bmatrix} \dot{p_x} \\ \dot{p_y} \\ \dot{p_z} \\ \dot{v_x} \\ \dot{v_y} \\ \dot{v_z} \end{bmatrix} = \underbrace{\begin{bmatrix} 0 & 0 & 0 & 1 & 0 & 0 \\ 0 & 0 & 0 & 0 & 1 & 0 \\ 0 & 0 & 0 & 0 & 0 & 1 \\ 0 & 0 & 0 & 0 & 0 & 0 \\ 0 & 0 & 0 & 0 & 0 & 0 \\ 0 & 0 & 0 & 0 & 0 & 0 \end{bmatrix}}_{A_c} \begin{bmatrix} p_x \\ p_y \\ p_z \\ v_x \\ v_y \\ v_z \end{bmatrix} + \underbrace{\begin{bmatrix} 0 & 0 & 0 \\ 0 & 0 & 0 \\ 0 & 0 & 0 \\ 1 & 0 & 0 \\ 0 & 1 & 0 \\ 0 & 0 & 1 \end{bmatrix}}_{B_c} \begin{bmatrix} a_x \\ a_y \\ a_z \end{bmatrix}$$

Consider the following trajectory optimization problem:
\begin{align*}
    \min_{x(t), u(t)} \quad & 100\|x(4) - (10, 10, 10, 0, 0, 0)^\top \| + \int_0^{4} \|u(t)\|^2 dt\\
    \operatorname{s.t.} \quad & \dot{x}(t)=A_c x(t) + B_c u(t)\\
    & 4 \le \|u(t)\| \le 6 \quad \forall t \in [0, 4]\\
    & x(0) = (0, 0, 0, 0, 0, 10)^\top
\end{align*}

First, it can be verified that all of the conditions for Assumptions \ref{assump:grand} and \ref{assump:existence} are met. In particular with regards to Assumption \ref{assump:existence}, if we solve Problem \ref{prob:delta} without the rate constraint and $\tilde{\rho}_{\max} = 4.5$ then
$$\delta(\tilde{\rho}_{\max}) \approx 4.123 > 3.322 \approx \max_{1 \le i \le N} \|u_{i+1}^* - u_i^*\|$$
and this inequality continues to hold for all $\tilde{\rho}_{\max} > 4.5$.

Solving the problem using piecewise constant controls and the single-shot method from Luo et al. with $N = 16$ yields the trajectory shown in Figure \ref{fig:zoh}. Meanwhile, solving the problem using piecewise linear controls and Algorithm 1 with $N = 16$ and $\varepsilon = 10^{-3}$ yields the trajectory shown in Figure \ref{fig:foh}. In the control plot for our method, observe how the linear interpolation of the control values appear tangent to the inner ball, demonstrating how the controls are satisfying the norm lower bound while maintaining minimal cost. A zoomed in version is shown on the right-hand side of Figure \ref{fig:zoom}.

Also, as seen in Figure \ref{fig:rho}, $\rho_{\min}^- \approx 4.026$ and $\rho_{\min}^+ \approx 5.105$. Note that the plotted points in Figure \ref{fig:rho} are colored red if LCvx is violated at more than $n_x + 1$ vertices, orange if $\eta_N^* = 0$, and blue otherwise. Our algorithm converges to $\tilde{\rho}_{\min} \approx 4.098$. When $\tilde{\rho}_{\min} = 4.025$ and $\tilde{\rho}_{\max} = 5.5$, LCvx is being violated as expected.

\begin{figure*}[t]
    \centering
    \includegraphics[width=0.3\textwidth]{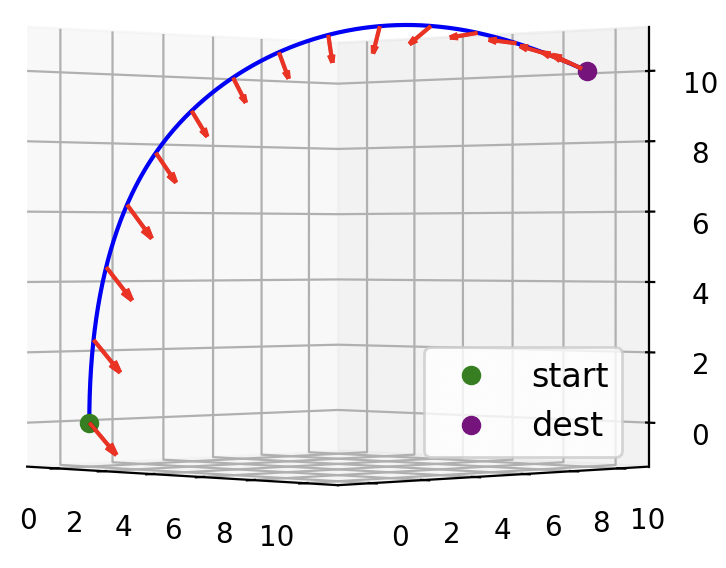}
    \includegraphics[width=0.3\textwidth]{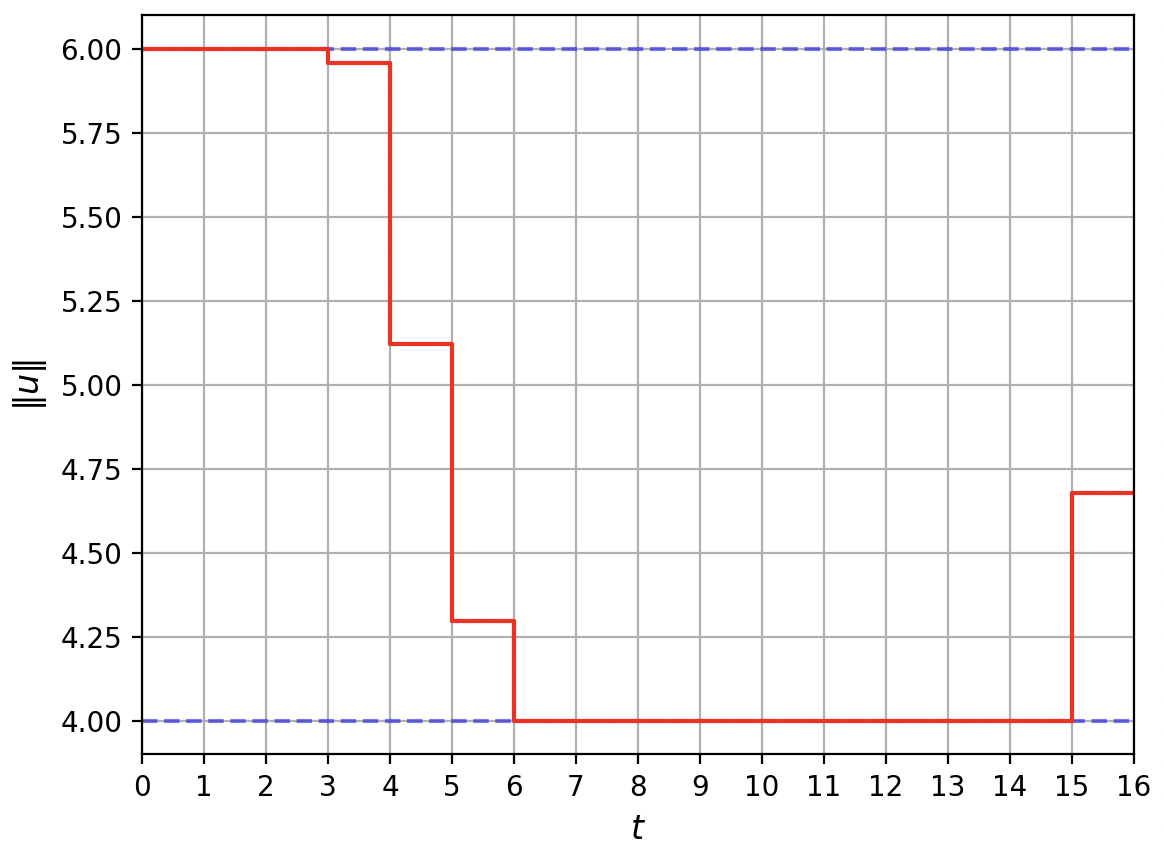}
    \includegraphics[width=0.3\textwidth]{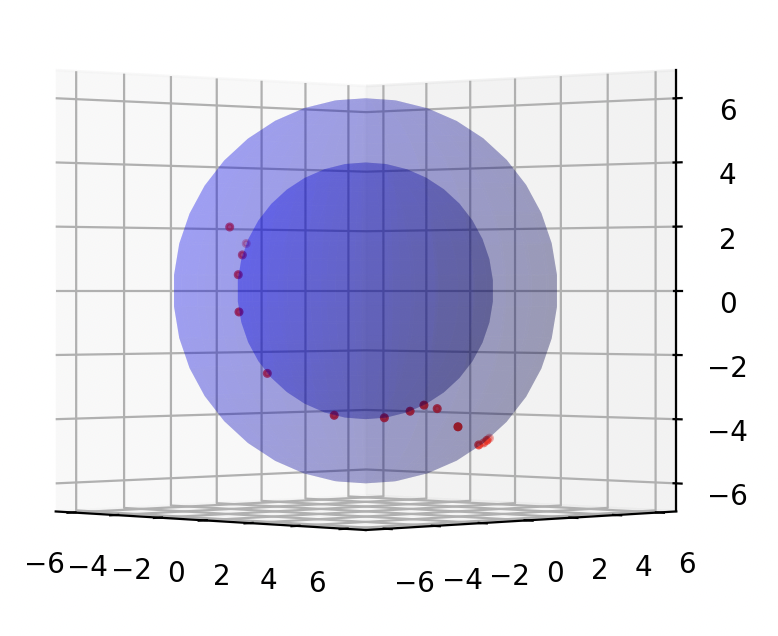}
    \caption{Results for LCvx with piecewise constant controls. Left: optimal trajectory. Middle: control magnitudes. Right: control plot.}
    \label{fig:zoh}
\end{figure*}

\begin{figure*}[t]
    \centering
    \includegraphics[width=0.3\textwidth]{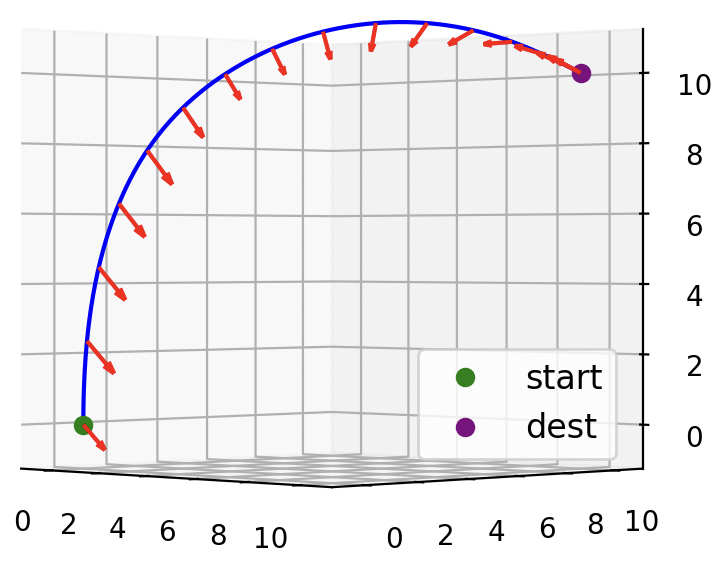}
    \includegraphics[width=0.3\textwidth]{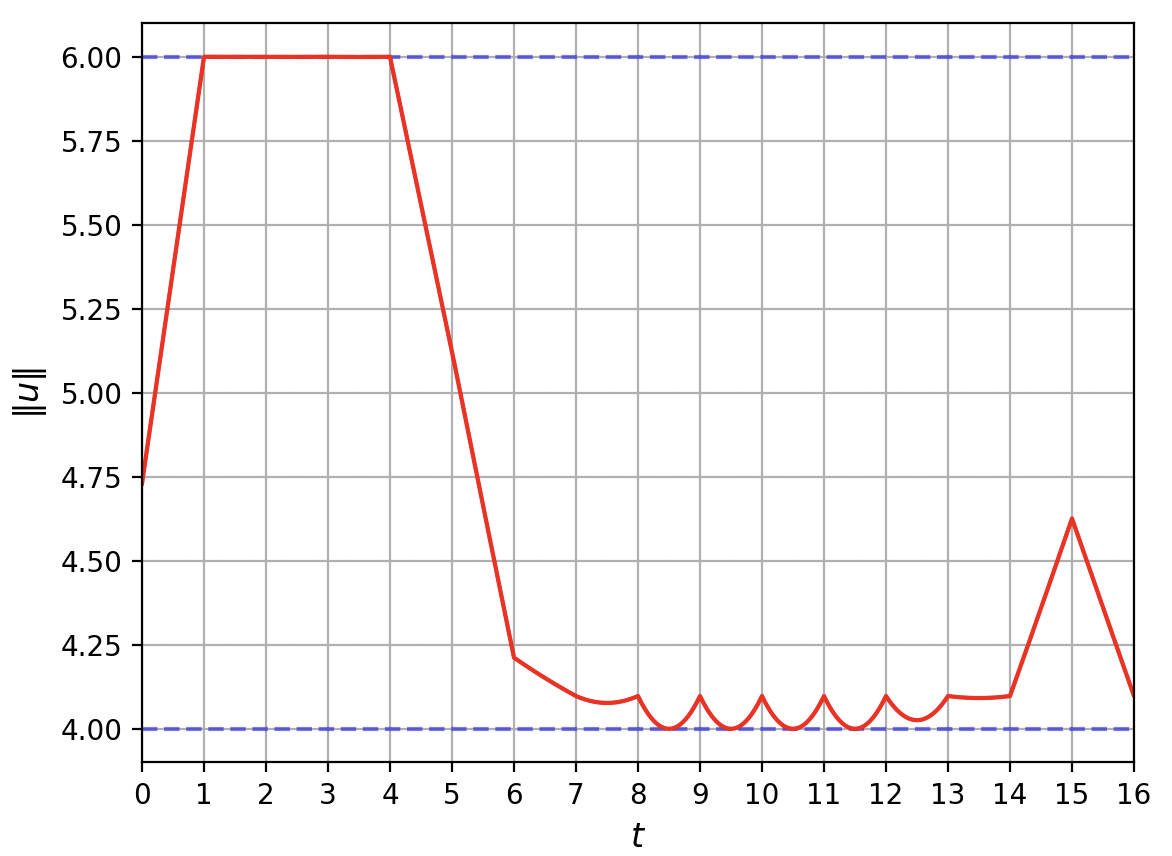}
    \includegraphics[width=0.3\textwidth]{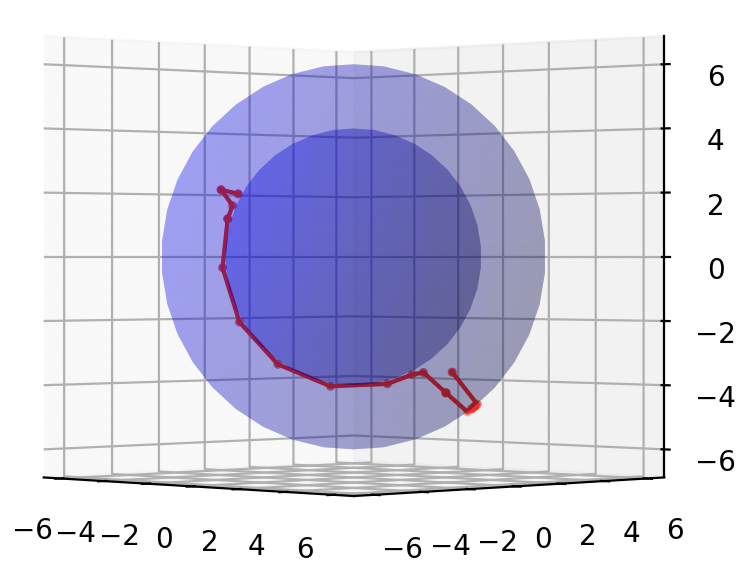}
    \caption{Results for LCvx with piecewise linear controls. Left: optimal trajectory. Middle: control magnitudes. Right: control plot.}
    \label{fig:foh}
\end{figure*}

\begin{figure*}[t]
    \centering
    \includegraphics[width=0.3\textwidth]{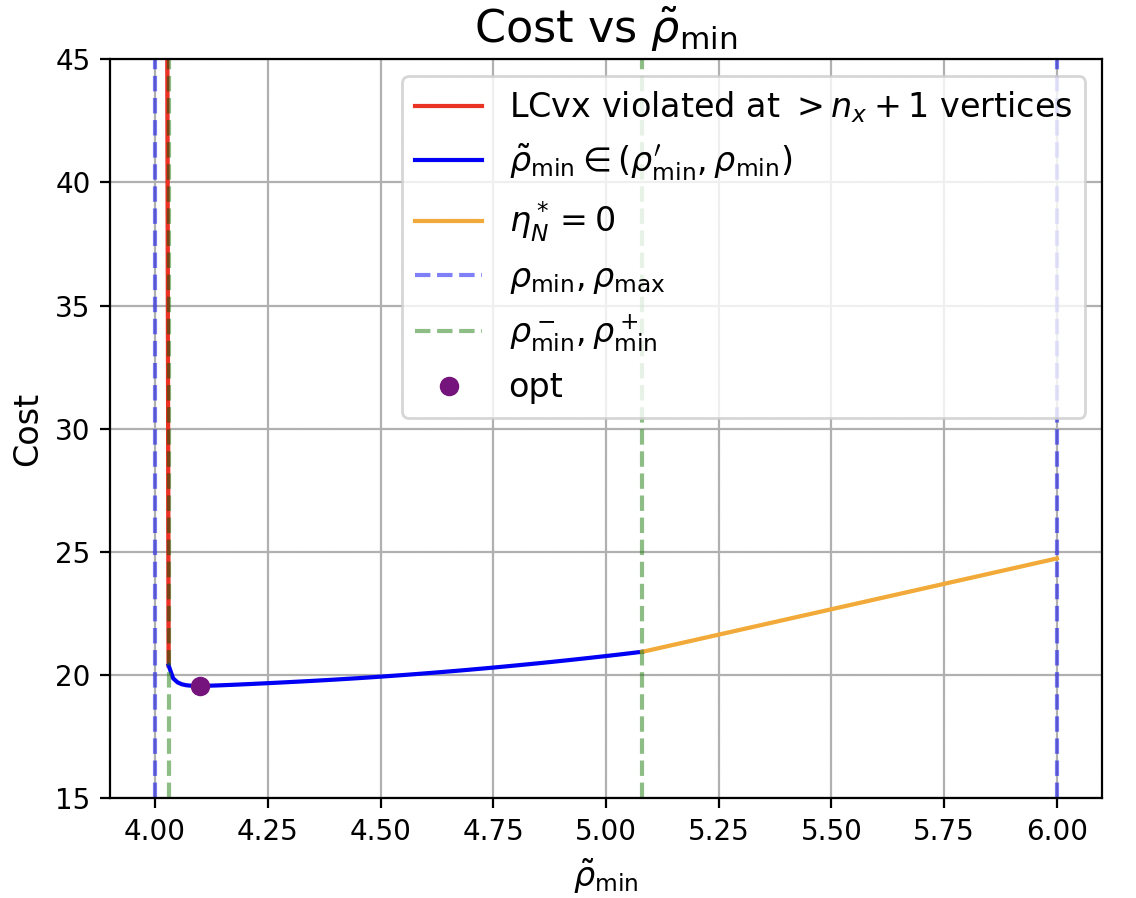}
    \includegraphics[width=0.3\textwidth]{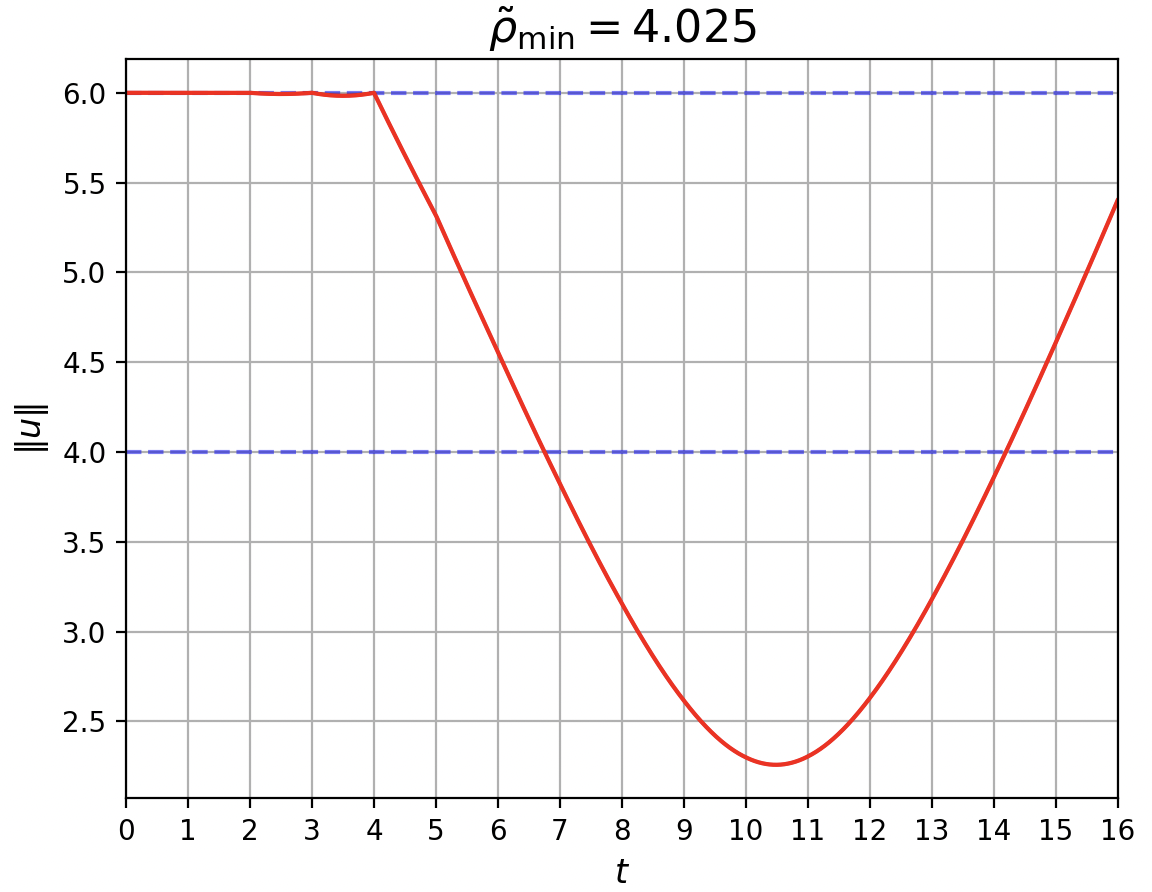}
    \includegraphics[width=0.3\textwidth]{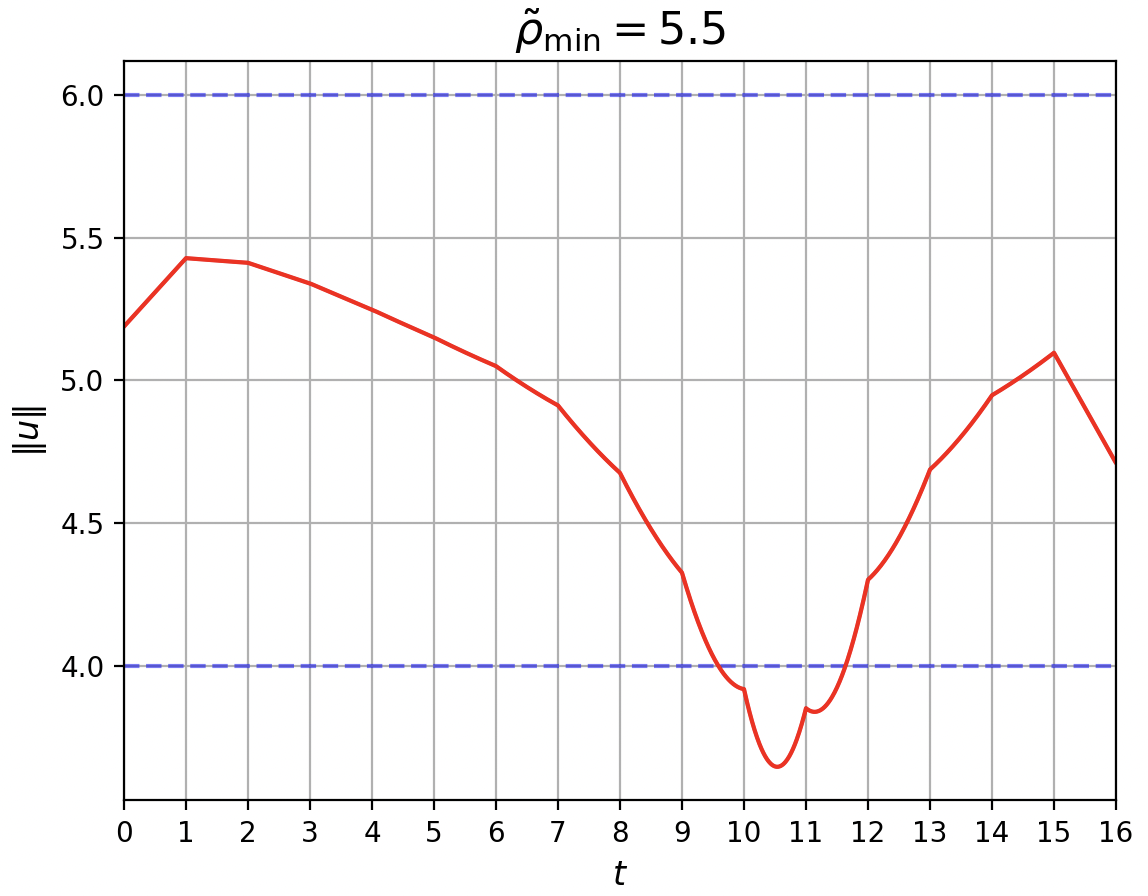}
    \caption{Left: cost vs $\tilde{\rho}_{\min}$. Middle: control magnitudes for $\tilde{\rho}_{\min} = 4.025$. Right: control magnitudes for $\tilde{\rho}_{\min} = 5.5$.}
    \label{fig:rho}
\end{figure*}

\section{Conclusion}

In this paper, we continue the line of work extending LCvx theory to the discrete-time setting, and in particular to optimal control problems with piecewise linear controls, addressing a significant gap in the existing LCvx literature and allowing a new class of nonconvex optimal control problems to be solved via numerical methods. Specifically, we present an algorithm that, under mild assumptions, finds an optimal trajectory where LCvx is violated along at most $2n_x + 2$ edges in $O(\log(\Delta\rho/\varepsilon))$ calls to our solver. The main insights of this paper were that given LCvx guarantees on the vertices, we can extend this guarantee to the edges by perturbing the control norm lower bound and adding a rate constraint on our controls. Finally, we showcase this algorithm on a classical numerical example, demonstrating the effectiveness of the methods outlined in this paper.

Future research directions include exploring other discrete-time nonconvex optimal control problems such as problems with different cost functionals, path constraints, and control parameterizations. More broadly, this work represents a step toward understanding the limitations of current LCvx theory and developing increasingly general algorithms capable of solving nonconvex optimal control problems with continuous-time constraint satisfaction in practice.

\section{Acknowledgments}
This work was completed as part of the University of Washington Autonomous Controls Laboratory under the supervision of Dr. Beh\c{c}et A\c{c}{\i}kme\c{s}e. Special thanks to Dayou Luo for his research that informed this work and his valuable feedback on this write-up.

\bibliographystyle{IEEEtran}
\bibliography{refs}

\appendix

\subsection{Proof of Proposition \ref{prop:conditions}.}
\begin{proof}
    From Lemma \ref{lem:duality}, we have
    $$u^* = \arg\min_{u} \mathcal{L}(x^*, u, \sigma^*, \eta^*, \mu_1^*, \mu_2^*)$$
    Isolating each $u_i$ and applying Lemma \ref{lem:etas}.1, we have\\
    $i = 1$:
    \begin{align*}
        u_1^* &= \arg\min_{u_1} I_V(u_1, \sigma_1^*) + \eta_1^{*\top} B_0 u_1\\
        &= \arg\min_{u_1} \eta_1^{*\top} B_0 u_1 \quad \textnormal{s.t. } u_1 \in V(\sigma_1^*)
    \end{align*}
    $2 \le i \le N$:
    \begin{align*}
        u_i^* &= \arg\min_{u_i} I_{V}(u_i, \sigma_i^*) + \eta_i^{*\top} B_0 u_i + \eta_{i-1}^{*\top} B_1 u_i\\
        &= \arg\min_{u_i} I_{V}(u_i, \sigma_i^*) + \eta_i^{*\top} B_0 u_i + (\eta_i^{*\top} A) B_1 u_i\\
        &= \arg\min_{u_i} \eta_i^{*\top} (B_0 + A B_1) u_i \quad \textnormal{s.t. } u_i \in V(\sigma_i^*)
    \end{align*}
    $i = N+1$:
    \begin{align*}
        u_{N+1}^* &= \arg\min_{u_{N+1}} I_V(u_{N+1}, \sigma_{N+1}^*) + \eta_N^{*\top} B_1 u_{N+1}\\
        &= \arg\min_{u_{N+1}} \eta_N^{*\top} B_1 u_{N+1} \quad \textnormal{s.t. } u_{N+1} \in V(\sigma_{N+1}^*)
    \end{align*}
    
    Since all of these are convex optimization problems, it follows from Proposition 2.1.2 of \cite{borwein} that\\
    $i = 1$:
    $$-\frac{\partial}{\partial u_1} \left[ \eta_1^{*\top} B_0 u_1 \right] = -B_0^\top \eta_1^* \in N_{V(\sigma_1^*)}(u_1^*)$$
    $2 \le i \le N$:
    \begin{align*}
        -\frac{\partial}{\partial u_i} \left[ \eta_i^{*\top} (B_0 + A B_1) u_i \right] &= -(B_0 + AB_1)^\top \eta_i^*\\
        &\in N_{V(\sigma_i^*)}(u_i^*)
    \end{align*}
    $i = N + 1$:
    $$-\frac{\partial}{\partial u_{N+1}} \left[ \eta_N^{*\top} B_1 u_{N+1} \right] = -B_1^\top \eta_N^* \in N_{V(\sigma_{N+1}^*)}(u_{N+1}^*)$$
    where $N_{V(\sigma_i^*)}(u_i^*)$ denotes the normal cone of convex set $V(\sigma_i^*)$ at point $u_i^*$.
    
    By Corollary 6.44 of \cite{bauschke}, $N_{V(\sigma_i^*)}(u_i^*)$ has nonzero elements only on the boundary of $V(\sigma_i^*)$. By Lemma \ref{lem:boundary}, we have our result.
\end{proof}

\subsection{Proof of Lemma \ref{lem:edges}.}
\begin{proof}
    Without loss of generality, assume $\|u_i\| \le \|u_{i+1}\|$. Recall that $u(t)$ linearly interpolates $u_i$ and $u_{i+1}$:
    $$u(t) = \left( \frac{t_{i+1} - t}{t_{i+1} - t_i} \right) u_i + \left(\frac{t - t_{i}}{t_{i+1} - t_i} \right) u_{i + 1}$$
    
    We now examine the minimum value of $\|u(t)\|$ for $t \in [t_i, t_{i+1}]$ and divide into two cases:
    
    Case 1: $\|u(t)\|$ is minimized at the endpoints. Since $\|u_i\| \le \|u_{i+1}\|$, this must be at $t = t_i$ and thus
    $$\|u(t)\| \ge \|u_i\| \ge \tilde{\rho}_{\min} \ge \rho_{\min}$$
    
    Case 2: $\|u(t)\|$ is not minimized at the endpoints. Then, by section 4.2 of \cite{strang}, the minimal value of $\|u(t)\|$ is the perpendicular distance from the origin to the line $L$ passing through $u_i$ and $u_{i+1}$. Let $\textnormal{proj}_L u_i$ denote the projection of $u_i$ onto $L$. Then
    \begin{align*}
        \|u(t)\| &\ge \|u_i - \textnormal{proj}_L u_i\|\\
        &= \sqrt{\|u_i\|^2 - \|\textnormal{proj}_L u_i\|^2}\\
        &= \sqrt{\|u_i\|^2 - \left\| \frac{u_i^\top (u_{i+1} - u_i)}{\|u_{i+1} - u_i\|^2} (u_{i+1} - u_i) \right\|^2}\\
        &= \sqrt{\|u_i\|^2 - \left( \frac{| u_i^\top (u_{i+1} - u_i) |}{\| u_{i+1} - u_i \|} \right)^2}
    \end{align*}
    Note that
    $$\|u_{i+1} - u_i\|^2 = \|u_{i+1}\|^2 + \|u_i\|^2 - 2 u_i^\top u_{i+1}$$
    By the triangle inequality,
    $$\|u_{i+1}\| \le \|u_i\| + \|u_{i+1} - u_i\|$$ 
    Combining the above gives
    \begin{align*}
        u_i^\top (u_{i+1} - u_i) &= u_i^\top u_{i+1} - \|u_i\|^2\\
        &= \frac{\|u_{i+1}\|^2 + \|u_i\|^2 - \|u_{i+1} - u_i\|^2}{2} - \|u_i\|^2\\
        &= \frac{\|u_{i+1}\|^2 - (\|u_i\|^2 + \|u_{i+1} - u_i\|^2)}{2}\\
        &\le 0
    \end{align*}
    Thus,
    $$|u_i^\top (u_{i+1} - u_i)| = \frac{\|u_{i+1} - u_i\|^2 + \|u_i\|^2 - \|u_{i+1}\|^2}{2}$$
    Since $\|u_i\| \le \|u_{i+1}\|$ and $0 < \rho_{\min} \le \tilde{\rho}_{\min}$,
    \begin{align*}
        \frac{| u_i^\top (u_{i+1} - u_i) |}{\| u_{i+1} - u_i \|} &= \frac{\|u_{i+1} - u_i\|^2 + \|u_i\|^2 - \|u_{i+1}\|^2}{2\|u_{i+1} - u_i\|}\\
        &\le \frac{\|u_{i+1} - u_i\|^2}{2\|u_{i+1} - u_i\|}\\
        &= \frac{\|u_{i+1} - u_i\|}{2}\\
        &\le \frac{\delta(\tilde{\rho}_{\min})}{2}\\
        &= \sqrt{\tilde{\rho}_{\min}^2 - \rho_{\min}^2}\\
        &\le \tilde{\rho}_{\min}\\
        &\le \|u_i\|
    \end{align*}
    Plugging this back into our initial inequality, we have
    \begin{align*}
        \|u(t)\| &\ge \sqrt{\tilde{\rho}_{\min}^2 - \left( \sqrt{\tilde{\rho}_{\min}^2 - \rho_{\min}^2} \right)^2}\\
        &= \rho_{\min}
    \end{align*}
    
    Finally, since $\| u \| \le \rho_{\max}$ is convex, if $\|u_i\| \le \rho_{\max}$ and $\|u_{i+1}\| \le \rho_{\max}$, then the linear interpolations $u(t)$ between them must obey $\|u(t)\| \le \rho_{\max}$ by definition. Thus, $\rho_{\min} \le \| u(t) \| \le \rho_{\max}$ for all $t \in [t_i, t_{i+1}]$.
\end{proof}

\end{document}